\theoremstyle{plain}
\newtheorem{theorem}{Theorem}[section]
\newtheorem{proposition}[theorem]{Proposition}
\newtheorem{corollary}[theorem]{Corollary}
\newtheorem{lemma}[theorem]{Lemma}
\newtheorem{conjecture}[theorem]{Conjecture}
\theoremstyle{definition}
\theoremstyle{remark}
\newcommand{\CC}{\mathbb{C}}
\newcommand{\DD}{\mathbb{D}}
\newcommand{\TT}{\mathbb{T}}
\newcommand{\cD}{\mathcal{D}}
\newcommand{\cF}{\mathcal{F}}
\newcommand{\cI}{\mathcal{I}}
\newcommand{\cK}{\mathcal{K}}
\newcommand{\cL}{\mathcal{L}}
\DeclareMathOperator{\rank}{rank}
\DeclareMathOperator{\dist}{dist}
\DeclareMathOperator{\supp}{supp}
\begin{document}


\title[Negative powers of contractions]{Negative powers of Hilbert-space contractions}

\author[T. Ransford]{Thomas Ransford}
\address{D\'epartement de math\'ematiques et de statistique, Universit\'e Laval,
Qu\'ebec City (Qu\'ebec),  Canada G1V 0A6.}
\email{ransford@mat.ulaval.ca}

\dedicatory{To the memory of Jean Esterle}

\thanks{Research supported by grants from NSERC and the Canada Research Chairs program.}

\begin{abstract}
We show that, given a closed subset $E$ of the unit circle of Lebesgue measure zero,
there exists a positive sequence $u_n\to\infty$ with the following property:
if $T$ is a Hilbert-space contraction such that $\sigma(T)\subset E$ and $\|T^{-n}\|=O(u_n)$
and $\rank(I-T^*T)<\infty$, then $T$ is a unitary operator. We further show that
the condition of measure zero is sharp.
\end{abstract}

\keywords{Contraction, Hilbert space, inner function, compressed shift, functional model}

\makeatletter
\@namedef{subjclassname@2020}{\textup{2020} Mathematics Subject Classification}
\makeatother

\subjclass[2020]{Primary 30J15, 47A20; Secondary 47A30,  47A45, 47A56}

\maketitle

\section{Introduction}\label{S:intro}

Throughout this article, by \emph{operator} 
we mean a bounded linear operator on a complex Banach space $X$.
The Banach algebra of all bounded operators on $X$ is denoted by $\cL(X)$.
An operator $T$ is a \emph{contraction} if $\|T\|\le1$. 
We denote the spectrum of $T$ by $\sigma(T)$.

Our story begins with the following result of Zarrabi \cite{Za93},
which was based on earlier work of Atzmon \cite{At80}.

\begin{theorem}[\protect{\cite[Th\'eor\`eme~3.1]{Za93}}]\label{T:Zarrabi}
Let $T$ be a Banach-space contraction such that 
$\sigma(T)$ is a countable subset of the unit circle $\TT$
and 
\begin{equation}\label{E:Zarrabi}
\lim_{n\to\infty}\frac{\log\|T^{-n}\|}{\sqrt{n}}=0.
\end{equation}
Then $T$ is an isometry, i.e., $\|T\|=\|T^{-1}\|=1$.
\end{theorem}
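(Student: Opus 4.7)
The plan is to build a weighted functional calculus for $T$ on the circle using a Beurling algebra adapted to the two-sided growth of its powers, reduce the spectrum to a single point via a Cantor--Bendixson induction, and conclude with a quantitative one-point argument.

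For the functional calculus, define $w(n):=\max(1,\|T^n\|)$ for $n\in\mathbb Z$; the hypotheses give $w(n)=1$ for $n\ge 0$ and $\log w(-n)=o(\sqrt n)$. Form the Beurling algebra $A_w(\TT)$ of $f:\TT\to\CC$ with $\|f\|_{A_w}:=\sum_{n\in\mathbb Z}|\hat f(n)|w(n)<\infty$. Standard arguments show $f\mapsto f(T):=\sum_n\hat f(n)T^n$ is a continuous unital algebra homomorphism $A_w(\TT)\to\cL(X)$ satisfying the spectral mapping theorem $\sigma(f(T))=f(\sigma(T))$. The condition $\log w(n)=o(\sqrt{|n|})$ forces $\sum_n\log^+ w(n)/(1+n^2)<\infty$, so by Beurling's classical theorem $A_w$ is a \emph{regular} Banach algebra: for any closed $F\subset\TT$ and open $U\supset F$ there is $f\in A_w$ with $f\equiv 1$ on $F$ and $\supp f\subset U$.

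Since $\sigma(T)$ is a countable compact subset of $\TT$, its transfinite Cantor--Bendixson derivatives terminate at $\emptyset$ at some countable ordinal. At each successor stage one peels off an isolated point $\lambda$ via the Riesz projection $P_\lambda=\tfrac{1}{2\pi i}\oint_{|z-\lambda|=\varepsilon}(zI-T)^{-1}\,dz$, splitting the current $T$ as a direct sum with one summand having spectrum $\{\lambda\}$ and the other having $\sigma(T)\setminus\{\lambda\}$; both summands inherit the contractivity and sub-exponential negative-power hypotheses. Limit stages are absorbed by taking closed invariant subspaces defined through the cut-offs of the regular algebra $A_w$. The problem thereby reduces to the base case $\sigma(T)=\{\lambda\}$ for a single $\lambda\in\TT$.

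For this base case, after multiplying by $\bar\lambda$ one may take $\lambda=1$ and the goal becomes $T=I$. The identity $I-T=T^{-n}(T^n-T^{n+1})$ gives $\|I-T\|\le\|T^{-n}\|\cdot\|T^n(I-T)\|$; since $\sigma(T)=\{1\}$, a quantitative refinement of the Katznelson--Tzafriri/Esterle decay $\|T^n(I-T)\|\to 0$ supplies a rate that, under $\|T^{-n}\|=e^{o(\sqrt n)}$, forces the right-hand side to tend to zero, yielding $T=I$. The main obstacle is precisely this last quantitative step: the $\sqrt n$ exponent in the hypothesis is the critical scale at which the intrinsic decay of $\|T^n(I-T)\|$ for contractions with one-point spectrum can cancel the permitted growth of $\|T^{-n}\|$, and matching these two rates sharply is the delicate analytic heart of the argument.
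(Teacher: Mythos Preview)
The paper does not itself prove Theorem~\ref{T:Zarrabi}; it is quoted from \cite{Za93} (building on \cite{At80}) purely as motivating background, so there is no argument in the paper to compare against.

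On its own merits, your outline has a real gap precisely where you flag it. The one-point case rests on $\|I-T\|\le\|T^{-n}\|\,\|T^n(I-T)\|$ together with the hope that a ``quantitative Katznelson--Tzafriri'' gives a decay rate on $\|T^n(I-T)\|$ strong enough to beat the growth $e^{o(\sqrt n)}$ of $\|T^{-n}\|$. But Katznelson--Tzafriri supplies no rate whatsoever: for contractions with $\sigma(T)=\{1\}$ the decay of $\|T^n(I-T)\|$ can be as slow as order $1/n$, and nothing in your argument rules this out. Any link between the size of $\|T^{-n}\|$ and the decay of $\|T^n(I-T)\|$ would itself have to be proved, and that is essentially the content of the theorem. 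In the actual Atzmon--Zarrabi proof the $\sqrt n$ threshold does not come from balancing two one-sided rates; it comes from complex analysis. One fixes $x\in X$, $\ell\in X^*$ and studies $G(\lambda)=\ell\bigl((\lambda I-T)^{-1}x\bigr)$, which is holomorphic on $\CC_\infty\setminus\sigma(T)$, has bounded Laurent coefficients at $\infty$ (from $\|T^n\|\le1$), and Taylor coefficients at $0$ dominated by $\|T^{-m}\|$. The hypothesis $\log\|T^{-m}\|=o(\sqrt m)$ corresponds to growth of order $<\tfrac12$ across the countable singular set on $\TT$, and a Phragm\'en--Lindel\"of/Levinson-type theorem forces $G$ to be globally bounded; this gives $\sup_m\|T^{-m}\|<\infty$, from which $\|T^{-1}\|=1$ follows.

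A secondary problem: the Cantor--Bendixson reduction is not well-founded as written. Removing a single isolated point by a Riesz projection does not lower the Cantor--Bendixson rank of the remaining spectrum, so this ``induction'' never reaches the base case, and the limit-ordinal step (``absorbed by cut-offs of the regular algebra'') is a placeholder, not a construction. In the original proof countability is used inside the function-theoretic step, not via a transfinite operator decomposition.
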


Zarrabi further showed that the condition that $\sigma(T)$ be countable 
is sharp in the following sense:
given an uncountable closed subset $E$ of $\TT$, 
there exists a contraction $T$ on a suitably chosen Banach space
such that $\sigma(T)\subset E$ 
and such that \eqref{E:Zarrabi} holds, but $\sup_n\|T^{-n}\|=\infty$.

This suggests that, to treat the case where $\sigma(T)$ is uncountable, 
we need to adapt the condition \eqref{E:Zarrabi}.
The next result, due to Esterle \cite{Es23} does just that.

Let $A(\TT)$ and $A^+(\TT)$ denote the algebras 
of absolutely convergent Fourier and  Taylor series respectively.
Given a closed subset $E$ of $\TT$, 
we denote by $J(E)$ the closure of the set of functions in $A(\TT)$ vanishing
on a neighbourhood of $E$. 
We say that $E$ is a \emph{strong $AA^+$-set} if, 
for every $f\in A(\TT)$, there exists $g\in A^+(\TT)$ such that $f-g\in J(E)$. 
Examples of strong $AA^+$-sets include closed countable subsets of $\TT$ 
and the Cantor middle-thirds set in $\TT$.

\begin{theorem}[\protect{\cite[Theorem~3.3]{Es23}}]\label{T:EsterleBanach}
Let $E$ be a strong $AA^+$-subset of $\TT$. 
Then there exists a positive sequence $u_n\to\infty$ and a constant $K$
with the following property:
if $T$ is a Banach-space contraction  such that $\sigma(T)\subset E$ and 
$\|T^{-n}\|=O(u_n)$ as $n\to\infty$, then $\sup_n\|T^{-n}\|\le K$.
\end{theorem}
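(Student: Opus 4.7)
My plan is to combine the strong $AA^+$-property of $E$ with a weighted functional calculus adapted to the sequence $u_n$.

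\emph{Step 1: open mapping.} By definition, $E$ being a strong $AA^+$-set means that the bounded linear map
\[
A^+(\TT)\oplus J(E)\longrightarrow A(\TT),\qquad (g,h)\mapsto g+h,
\]
is surjective. The open mapping theorem therefore provides a constant $C>0$ such that every $f\in A(\TT)$ admits a decomposition $f=g+h$ with $g\in A^+(\TT)$, $h\in J(E)$ and $\|g\|_{A^+}+\|h\|_{A(\TT)}\le C\|f\|_{A(\TT)}$. Applying this to $f(z)=z^{-n}$ yields $g_n\in A^+(\TT)$ and $h_n\in J(E)$ with $z^{-n}=g_n+h_n$, $\|g_n\|_{A^+}\le C$ and $\|h_n\|_{A(\TT)}\le C$. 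The constant $K$ in the conclusion will turn out to be $C$.

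\emph{Step 2: choice of $u_n$.} Each $h_n$ lies in $J(E)$, so there exist $h_n^{(k)}\in A(\TT)$, vanishing on some open neighbourhood $U_n^{(k)}$ of $E$ in $\TT$, with $h_n^{(k)}\to h_n$ in $A(\TT)$-norm. I would choose $u_n\to\infty$ slowly enough, by a diagonal extraction, that for every $n$ the approximants $h_n^{(k)}$ can be arranged to converge to $h_n$ in the weighted norm
\[
\|f\|_u:=\sum_{m\ge 0}|\hat f(m)|+\sum_{m<0}|\hat f(m)|\,u_{|m|}.
\]
Since the data $(g_n,h_n,h_n^{(k)})$ depends only on $E$, so does $u_n$.

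\emph{Step 3: weighted calculus.} For any contraction $T$ satisfying the hypotheses with $\|T^{-m}\|\le Mu_m$, the assignment $\sum a_mz^m\mapsto \sum a_mT^m$ defines a bounded algebra homomorphism $\Phi_u\colon A_u(\TT)\to\cL(X)$ with $\|\Phi_u(f)\|\le M\|f\|_u$, where $A_u(\TT)$ denotes the Banach algebra of Laurent series with $\|\cdot\|_u<\infty$. Because each $h_n^{(k)}$ vanishes in a neighbourhood of $\sigma(T)\subset E$ and $A_u(\TT)$ is a regular Banach algebra of functions on $\TT$ (for appropriate choices of $u_n$), a Ditkin-type localization argument yields $\Phi_u(h_n^{(k)})=0$. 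Continuity of $\Phi_u$ along the sequence $h_n^{(k)}\to h_n$ in $\|\cdot\|_u$ (guaranteed by the choice of $u_n$) then gives $\Phi_u(h_n)=0$. Hence
\[
T^{-n}=\Phi_u(z^{-n})=\Phi_u(g_n)+\Phi_u(h_n)=g_n(T),
\]
and $\|T^{-n}\|\le\|g_n\|_{A^+}\le C=K$, with $K$ independent of $T$ and $M$.

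\emph{Main obstacle.} The delicate point is the simultaneous construction, purely from $E$, of a single sequence $u_n\to\infty$ that both (i) makes all the approximations $h_n^{(k)}\to h_n$ converge in $\|\cdot\|_u$, and (ii) supports a Ditkin-style vanishing $\Phi_u(h_n^{(k)})=0$ in every admissible $T$. Concretely, one must quantify the strong $AA^+$-property by choosing approximants $h_n^{(k)}\in J(E)$ whose negative Fourier coefficients decay at a rate controlled uniformly in $n$, and then set $u_n$ to grow just slowly enough that this decay dominates the weight. Balancing these two requirements, while keeping the regularity of $A_u(\TT)$ intact, is the heart of the proof.
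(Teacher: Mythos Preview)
The paper does not prove this theorem at all: Theorem~\ref{T:EsterleBanach} is quoted verbatim from \cite[Theorem~3.3]{Es23} as background for the Hilbert-space problem, and no argument for it is given here. So there is no ``paper's own proof'' to compare your proposal against.

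That said, your outline is the natural strategy and is broadly in the spirit of Esterle's work on this circle of problems. Step~1 is correct and is indeed the source of the absolute constant~$K$: the open mapping theorem applied to $A^+(\TT)\oplus J(E)\twoheadrightarrow A(\TT)$ gives uniform decompositions $z^{-n}=g_n+h_n$ with $\|g_n\|_{A^+}\le C$. Step~3 is also morally right: once you know $h_n(T)=0$ you are done, since $\|g_n(T)\|\le\|g_n\|_{A^+}$ uses only that $T$ is a contraction (the constant $M$ drops out, as you implicitly note).

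The genuine gap, which you correctly flag as the ``main obstacle'', is the construction of $(u_n)$ in Step~2 together with the vanishing $\Phi_u(h_n^{(k)})=0$. Two points deserve care. First, for the Beurling algebra $A_u(\TT)$ to have Gelfand spectrum equal to $\TT$ and to be regular, one typically needs a non-quasianalyticity condition on the weight (e.g.\ $\sum_n n^{-2}\log u_n<\infty$); you should make explicit that your diagonal choice of $u_n$ can be made to satisfy this. Second, the assertion ``$h$ vanishes near $\sigma(T)$ implies $h(T)=0$'' is not automatic from regularity alone: one must identify the hull of $\ker\Phi_u$ with a subset of $\sigma(T)$, which requires an argument (for instance via the holomorphic functional calculus on an annular neighbourhood of $\TT\setminus U_n^{(k)}$, using that $\sigma(T)\subset\TT$). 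None of this is insurmountable, but it is exactly where the work lies; your sketch is honest about this.
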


The condition that $E$ be a strong $AA^+$-set may seem restrictive, 
but in fact it is optimal.
Esterle had previously shown in \cite{Es94b} that, 
given a closed subset $E$ of $\TT$ that is not a strong $AA^+$-set,
and given any positive sequence $u_n\to\infty$,
there exists a contraction $T$ on a suitably chosen Banach space such that 
$\sigma(T)\subset E$ and
$\|T^{-n}\|=O(u_n)$, but $\sup_n\|T^{-n}\|=\infty$.

The situation is quite different for contractions on \emph{Hilbert spaces}.
In a private communication,
Esterle expressed the belief that the following result is true. 
We formulate it as a conjecture.

\begin{conjecture}\label{Cj:Esterle}
Let $E$ be a closed subset of $\TT$ of Lebesgue measure zero.
Then there exists a positive sequence $u_n\to\infty$ with the following property:
if $T$ is a Hilbert-space contraction such that $\sigma(T)\subset E$ and $\|T^{-n}\|=O(u_n)$
as $n\to\infty$, then $T$ is a unitary operator.
\end{conjecture}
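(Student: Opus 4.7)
My plan is to combine the Sz.-Nagy--Foias functional model with harmonic-analytic estimates attached to $E$. First I would apply the canonical decomposition $T = U \oplus T_0$ on $\cH = \cH_u \oplus \cH_{\mathrm{cnu}}$, where $U$ is unitary and $T_0$ is completely non-unitary. Since $T_0$ inherits $\sigma(T_0) \subseteq E$ and $\|T_0^{-n}\| \le \|T^{-n}\|$, and since $U$ is already unitary, the conjecture reduces to showing that under the hypotheses $T_0$ acts on the zero space. I therefore assume throughout that $T$ itself is c.n.u.\ and invertible with $\sigma(T) \subseteq E$, and aim to force $\cH = \{0\}$.

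Next, invoke the Sz.-Nagy--Foias model. Because $\sigma(T) \subseteq E \subseteq \TT$ has Lebesgue measure zero, the standard structure theory of c.n.u.\ contractions shows that the characteristic function $\Theta_T$ is a (two-sided) inner operator-valued function, and the inclusion $\sigma(T) \subseteq \TT$ rules out zeros of $\Theta_T$ in $\DD$, so $\Theta_T$ is \emph{purely singular} with singular measure supported in $E$. Moreover $T$ is unitarily equivalent to the compressed-shift model operator $T_\Theta$ on $\cK_{\Theta_T}$, and $T^{-n}$ corresponds to an explicit compression on this model space. The heart of the argument is then to construct a sequence $v_n \to \infty$, depending only on $E$, such that every nontrivial singular inner $\Theta$ with singular support in $E$ satisfies $\|T_\Theta^{-n}\| \ge v_n$ for infinitely many $n$; choosing $u_n \to \infty$ with $u_n = o(v_n)$ then rules out nontrivial $\Theta_T$, which forces $\cK_{\Theta_T} = \{0\}$ and hence $T_0 = 0$.

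The sequence $v_n$ should emerge from quasi-analyticity considerations on $\TT \setminus E$: because $E$ has measure zero, the boundary function $\bar\zeta^n$ cannot be well approximated on $E$ by restrictions of $H^\infty$-functions (a Beurling--Carleman type phenomenon), and this lack of approximability translates, via the duality between $\cK_\Theta$ and a quotient of $H^2$, into a large lower bound on $\|T_\Theta^{-n}\|$. The principal obstacle is to make this lower bound \emph{uniform} across all nontrivial $\Theta$ with singular support in $E$. For scalar- or matrix-valued $\Theta$ — equivalently, for contractions with finite-dimensional defect spaces — a determinantal reduction combined with the quasi-analyticity estimate should suffice, and this appears to be the route toward the paper's main theorem under the extra hypothesis $\rank(I - T^*T) < \infty$. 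For operator-valued $\Theta$ with infinite-dimensional defect spaces one must control infinitely many independent directions simultaneously and the compactness argument underlying the finite-rank case breaks down; closing this gap appears to require genuinely new ideas, which is presumably why the full statement is framed as a conjecture rather than a theorem.
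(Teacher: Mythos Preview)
The statement you are asked to prove is labelled as a \emph{conjecture} in the paper and is explicitly left open; the paper does not give a proof of it. You correctly recognize this in your final paragraph, and your outline for the finite-rank case is precisely the strategy the paper uses to establish its main theorem (Theorem~\ref{T:main}): reduce to a completely non-unitary contraction, invoke the Sz.-Nagy--Foias model, and for finite-dimensional defect spaces use a determinantal reduction to the scalar case. So at the level of architecture your sketch and the paper agree.

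Where your sketch diverges from the paper is in the mechanism producing the sequence $v_n$. You invoke ``quasi-analyticity considerations on $\TT\setminus E$'' and a ``Beurling--Carleman type phenomenon'' governing approximability of $\overline{\zeta}^n$ by $H^\infty$-functions on $E$. This is suggestive but vague, and it is not how the paper actually proceeds. The paper instead establishes a two-sided estimate relating $\|S_\theta^{-n}\|$ to the quantity $\delta_n(\theta):=\inf_{z\in\DD}\max\{|z|^n,|\theta(z)|\}$ via corona-type bounds, and then controls $\delta_n(\theta)$ through a Hausdorff-measure argument: Besicovitch's lemma yields a measure function $h$ with $\Lambda_h(E)=0$ and $h(t)/t\to\infty$, and any singular measure $\nu$ supported on $E$ must then have arcs $I_k$ with $\nu(I_k)/|I_k|$ large, which forces $m_\theta(r)$ to be small at suitably chosen radii and hence $\delta_n(\theta)$ to be small along a subsequence. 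The sequence $\epsilon_n$ (your $1/v_n$) is built explicitly from $h$. Your duality/quasi-analyticity heuristic may well be reformulable in these terms, but as written it does not supply the concrete construction, and that construction is the substantive content of the finite-rank result. For the full conjecture the paper agrees with you that the infinite-dimensional defect case is the missing ingredient, and reformulates the open problem accordingly in its concluding section.
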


The conjecture is known to be true if $E$ is a so-called \emph{Beurling--Carleson set}, 
namely a closed subset of $\TT$ such that
\[
\int_\TT \log\dist(e^{i\theta},E)\,d\theta>-\infty.
\]
In fact Esterle established the following result.

\begin{theorem}[\protect{\cite[Theorem~6.4]{Es94a}}]\label{T:Esterle}
Let $E$ be a Beurling--Carleson set.
If $T$ is a Hilbert-space contraction such that $\sigma(T)\subset E$ and 
$\|T^{-n}\|=O(n^k)$ for some $k\ge0$,
then $T$ is a unitary operator.
\end{theorem}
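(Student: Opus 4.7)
The plan is to combine the Sz.-Nagy--Foias canonical decomposition with the special analytic structure of Beurling--Carleson sets. First I would write $T=U\oplus T_0$, with $U$ unitary and $T_0$ completely non-unitary; each summand inherits the hypotheses, so it suffices to prove that the c.n.u.\ piece $T_0$ acts on the zero subspace. Because $\|T_0^n\|\le1$ for $n\ge0$ and $\|T_0^{-n}\|=O(n^k)$, the doubly-infinite series $\sum_{n\in\mathbb{Z}}\widehat f(n) T_0^n$ converges in operator norm whenever the Fourier coefficients of $f$ decay faster than $|n|^{-(k+2)}$, yielding a continuous functional calculus $f\mapsto f(T_0)$ on a Wiener-type smooth subalgebra $\mathcal{A}\subset A(\TT)$. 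Every $f\in\mathcal{A}$ that vanishes on an open neighbourhood of $E\supset\sigma(T_0)$ satisfies $f(T_0)=0$ by the Riesz--Dunford holomorphic calculus, and continuity then extends this to all of $J(E)\cap\mathcal{A}$.

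The analytic input from the Beurling--Carleson property is the existence, for each prescribed decay rate, of an outer function $\phi\in H^\infty$ whose boundary values are smooth on $\TT$ and vanish to arbitrarily high order on $E$. Choosing the decay rate large compared with $k$ places $\phi$ in $\mathcal{A}$, and the Beurling--Carleson condition further guarantees that $\phi$ can be approximated in $\mathcal{A}$ by functions vanishing in a neighbourhood of $E$, so that $\phi$ lies in the closure of $J(E)\cap\mathcal{A}$. Combined with the preceding remark, this gives $\phi(T_0)=0$.

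Finally I would convert the relation $\phi(T_0)=0$, with $\phi\in H^\infty$ a non-zero scalar outer function, into the conclusion that $T_0$ acts on $\{0\}$. This relation places $T_0$ in the Sz.-Nagy--Foias class $C_0$, so its minimal inner function $m_0$ divides $\phi$; but an inner divisor of an outer function is a unimodular constant, and if $m_0$ is constant then $m_0(T_0)=0$ forces the underlying Hilbert space to be $\{0\}$. Hence $T=U$ is unitary. The most delicate point I expect to encounter is the simultaneous control of smoothness (needed to place $\phi$ in $\mathcal{A}$) and of vanishing rate on $E$ (needed for the approximation by $J(E)$): both rely essentially on the Beurling--Carleson condition, whose absence for general measure-zero sets is precisely what keeps Conjecture~\ref{Cj:Esterle} out of reach of this approach.
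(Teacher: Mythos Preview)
The paper does not supply its own proof of this theorem: it is quoted verbatim from \cite[Theorem~6.4]{Es94a} as background, and no argument is given here. So there is no in-paper proof to compare your proposal against.

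That said, your outline is a plausible reconstruction of the standard route to this result and is broadly along the lines of Esterle's original argument: reduce to the completely non-unitary part via Langer's decomposition, exploit the polynomial bound on $\|T^{-n}\|$ to obtain a functional calculus on a weighted Wiener-type algebra, use the Beurling--Carleson hypothesis to manufacture a smooth outer function $\phi$ vanishing on $E$ that lies in this algebra, conclude $\phi(T_0)=0$, and finish with the $C_0$ minimal-function argument. The one step you should tighten is the passage from ``$\phi(T_0)=0$ in the $\mathcal{A}$-calculus'' to ``$\phi(T_0)=0$ in the Sz.-Nagy--Foias $H^\infty$-calculus'': you need the two calculi to agree on $\phi$, which follows once you check that both extend the polynomial calculus and are continuous for approximations of $\phi$ by polynomials in the relevant topologies. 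The approximation of $\phi$ in $\mathcal{A}$ by functions vanishing near $E$ (i.e., that Beurling--Carleson sets are sets of synthesis in smooth algebras) is indeed the substantive analytic input, and you are right to flag it as the crux.
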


Esterle also showed that Theorem~\ref{T:Esterle} does not extend to
general closed sets $E$ of measure zero. In fact, by a result of Kellay \cite[Th\'eor\`eme 3.4]{Ke98},
given any positive sequence $u_n\to\infty$, there exists a Hilbert-space contraction $T$
such that $\sigma(T)$ is a subset of $\TT$ of measure zero and $\|T^{-n}\|=O(u_n)$,
but $\sup_n\|T^{-n}\|=\infty$.
Thus, in Conjecture~\ref{Cj:Esterle},
no single sequence $(u_n)$ can work simultaneously for all sets $E$ of measure zero.

Esterle wrote in \cite{Es94a} `The subject clearly deserves further investigation'.
Our main goal in this paper is to establish the following result.

\begin{theorem}\label{T:main}
Let $E$ be a closed subset of $\TT$ of Lebesgue measure zero.
Then there exists a positive sequence $u_n\to\infty$ with the following property:
if $T$ is a Hilbert-space contraction such that $\sigma(T)\subset E$ and $\|T^{-n}\|=O(u_n)$
and  $\rank(I-T^*T)<\infty$, then $T$ is a unitary operator.
\end{theorem}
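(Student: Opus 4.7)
\emph{Reduction to the completely non-unitary case.} The natural first step is the Sz.-Nagy-Foia\c{s} canonical decomposition $T = T_u\oplus T_c$, where $T_u$ is unitary and $T_c$ is completely non-unitary. The defect lives entirely in the c.n.u.\ summand, so $\rank(I-T_c^*T_c)=\rank(I-T^*T)<\infty$; and the hypotheses $\sigma(T_c)\subset\sigma(T)\subset E$ and $\|T_c^{-n}\|\le\|T^{-n}\|=O(u_n)$ transfer automatically. It therefore suffices to prove $T_c=0$, which I would attack by contradiction, assuming $T_c\neq 0$.

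\emph{Functional model.} By the Sz.-Nagy-Foia\c{s} functional model, a nonzero c.n.u.\ contraction with finite defect is unitarily equivalent to a compressed shift $S_\Theta$ on a model space $\cK_\Theta$, where $\Theta$ is a matrix-valued analytic contraction between finite-dimensional defect spaces. Because $\sigma(T_c)\subset E\subset\TT$ has Lebesgue measure zero, $\Theta$ must be unitary-valued a.e.\ on $\TT$ and free of zeros in $\DD$; that is, $\Theta$ is a matrix-valued \emph{singular inner} function whose spectral support lies in $E$. In particular $\det\Theta$ is a nontrivial scalar singular inner function whose associated measure is supported in $E$.

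\emph{Construction of $u_n$.} The crux of the argument is to produce a sequence $u_n\to\infty$, depending only on $E$, that outgrows $\|S_\Theta^{-n}\|$ for every admissible $\Theta$. A Nehari-Sarason-type identity for model operators expresses $\|S_\Theta^{-n}\|$ as a distance in $L^\infty$ modulo $\Theta H^\infty$, reducing the problem to an extremal problem controlled by the singular measure of $\Theta$. In the scalar setting, the techniques underlying Esterle's Theorem~\ref{T:Esterle} together with Kellay's measure-zero refinements \cite{Ke98} yield explicit lower bounds on $\|S_\theta^{-n}\|$ in terms of the local behaviour of the singular measure near $E$; these can be assembled into a growth function $v_n=v_n(E)\to\infty$, and one then takes $u_n$ to tend to infinity strictly more slowly than $v_n$ (say $u_n=\sqrt{v_n}$).

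\emph{Main obstacle.} The principal technical difficulty is transferring the scalar lower bound to matrix-valued $\Theta$ \emph{uniformly in the matrix dimension} $N=\dim\cD$, which is bounded only by the a priori unbounded quantity $\rank(I-T^*T)$. A naive passage through $\det\Theta$ can absorb factors of $N$ in the unfavourable direction, giving an estimate whose quality degrades with $N$ and hence cannot be converted into a single $u_n$ depending only on $E$. Overcoming this is likely to require a Smith-normal-form or cyclic factorization of $\Theta$ into scalar singular inner divisors supported in $E$, together with a careful propagation of the scalar bound through the induced filtration of $\cK_\Theta$; this matrix-to-scalar reduction is the decisive step I would expect to carry the bulk of the work.
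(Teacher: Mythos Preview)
Your overall architecture matches the paper's exactly: reduce to the completely non-unitary part, invoke the functional model to realize it as $S_\Theta$ with $\Theta$ a matrix-valued singular inner function with singular support in $E$, and then reduce to a scalar inner function via the determinant. The gap is that you misdiagnose the matrix-to-scalar step and leave the scalar step unspecified.

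On the matrix step: your ``main obstacle'' dissolves with a one-line normalization. The paper takes $\Delta:=(\det\Theta)^{1/N}$ rather than $\det\Theta$. The elementary inequality $|\det A|\le\|A\|^N$ applied to $A=\Theta(\lambda)^{-1}$ yields $\|\Theta(\lambda)^{-1}\|\ge|\Delta(\lambda)|^{-1}$, and hence $\delta_n(\Theta)\le\delta_n(\Delta)$ with \emph{no} residual dependence on $N$ (here $\delta_n(\Psi):=\inf_{\lambda\in\DD}\max\{|\lambda|^n,1/\|\Psi(\lambda)^{-1}\|\}$). A commutant-lifting/corona argument gives the lower bound $\|S_\Theta^{-n}\|\ge\tfrac12(\delta_n(\Theta)^{-1}-1)$, so one only needs to control $\delta_n(\Delta)$. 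Since $\Delta$ is a scalar singular inner function with measure supported in $E$, no Smith normal form or filtration is required.

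On the scalar step: the content that is actually doing the work is a uniformity statement you have not supplied, namely that there exists a single sequence $\epsilon_n\to0$, depending only on $E$, such that $\liminf_n \delta_n(\theta)/\epsilon_n=0$ for \emph{every} scalar singular inner function $\theta$ with measure on $E$. This uniformity over all masses is precisely what makes the $N$-th root harmless. The paper proves it by a Hausdorff-measure argument: Besicovitch's observation produces a measure function $h$ with $\Lambda_h(E)=0$ and $h(t)/t\to\infty$, which forces every singular measure on $E$ to have arcs $I_k$ with $\nu(I_k)/|I_k|\to\infty$ at a rate governed by $h$; a Poisson-kernel estimate then converts this into the required decay of $\delta_n(\theta)$. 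Neither Esterle's Beurling--Carleson theorem nor Kellay's construction enters. Setting $u_n:=1/\epsilon_n$ finishes the proof.
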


This result stops short of proving Conjecture~\ref{Cj:Esterle} because of the extra
condition $\rank(I-T^*T)<\infty$. 
However, Theorem~\ref{T:main} does apply to every closed subset $E$ of $\TT$ of measure zero,
the first such result as far as we know.

The condition of measure zero is sharp. This is our second result.

\begin{theorem}\label{T:converse}
Let $E$ be a closed subset of $\TT$ of positive Lebesgue measure.
Then, for any positive sequence $u_n\to\infty$,
there exists a Hilbert-space contraction $T$ such that $\sigma(T)\subset E$ and $\|T^{-n}\|=O(u_n)$
and  $\rank(I-T^*T)=1$, but $\sup_n\|T^{-n}\|=\infty$.
\end{theorem}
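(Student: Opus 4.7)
The plan is to construct $T$ via the Sz.-Nagy--Foias functional model, taking $T=S_\Theta$ to be the compressed shift $P_{K_\Theta} M_z|_{K_\Theta}$ on $K_\Theta=H^2\ominus \Theta H^2$ for a carefully chosen singular inner function $\Theta$. For any nonconstant scalar inner $\Theta$, $S_\Theta$ is a Hilbert-space contraction with $\rank(I-S_\Theta^*S_\Theta)=1$, and $S_\Theta$ is invertible with $\sigma(S_\Theta)=\sigma(\Theta)$ whenever $\Theta(0)\ne 0$, which is automatic when $\Theta$ is singular inner. Since $E$ has positive Lebesgue measure it is uncountable; pick a countable dense subset $\{\zeta_k\}\subset E$ and positive weights $c_k$ with $\sum c_k<\infty$, form the atomic singular measure $\mu=\sum c_k\delta_{\zeta_k}$, and take $\Theta=\Theta_\mu$ the associated singular inner function. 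Then $\sigma(\Theta)=\supp\mu=\overline{\{\zeta_k\}}=E$, so $T=S_\Theta$ meets the spectral and defect requirements of the theorem.

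The lower bound $\sup_n\|T^{-n}\|=\infty$ is automatic. For any nonconstant scalar inner $\Theta$ the model operator $S_\Theta$ is of class $C_{00}$, so $S_\Theta^n\to 0$ strongly; a uniform bound $\|S_\Theta^{-n}\|\le M$ would give $\|f\|=\|S_\Theta^{-n}S_\Theta^n f\|\le M\|S_\Theta^n f\|\to 0$ for every $f\in K_\Theta$, contradicting $K_\Theta\ne\{0\}$.

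The hard half is arranging $\|T^{-n}\|=O(u_n)$. By Sarason's theorem, $S_\Theta^{-n}=T_{\psi_n}:=P_{K_\Theta}M_{\psi_n}|_{K_\Theta}$ for any $\psi_n\in H^\infty$ satisfying $z^n\psi_n-1\in \Theta H^\infty$, and
\[
\|S_\Theta^{-n}\|=\mathrm{dist}_{H^\infty}(\psi_n,\Theta H^\infty)=\inf_{h\in H^\infty}\|\psi_n+\Theta h\|_\infty.
\]
Writing $-1/\Theta=q_n+z^n r_n$ with $q_n$ the Taylor polynomial of degree $<n$ of $-1/\Theta$ at $0$, a natural choice is $\psi_n=(1+\Theta q_n)/z^n$, which lies in $H^\infty$ and gives the explicit bound $\|S_\Theta^{-n}\|\le 1+\|q_n\|_{L^\infty(\TT)}$. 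The Cauchy estimate then controls $\|q_n\|_\infty$ by $\sup_{|z|=r}|\Theta(z)|^{-1}=\exp\bigl(\sup_\theta\int P_r(\theta-t)\,d\mu(t)\bigr)$; that is, by the maximal Poisson integral of $\mu$ on the circle of radius $r$.

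The main obstacle is the quantitative design of $\mu$: to achieve growth no faster than an arbitrarily prescribed $u_n\to\infty$, the weights $c_k$ and the clustering of the atoms must be chosen so that the Poisson integral of $\mu$ grows sufficiently slowly inside $\DD$. The essential leverage comes from the assumption that $E$ has positive Lebesgue measure---hence fails the Beurling--Carleson condition, so the obstruction of Theorem~\ref{T:Esterle} does not apply. I would carry this out by a diagonal construction, enumerating a dense sequence $\{\zeta_k\}$ of $E$ and choosing each successive weight $c_k$ small enough that the bound $\|q_n\|_\infty\le u_n$ (or, if needed, the sharper Sarason infimum over corrections $\Theta h$) is preserved for all indices $n\le k$ already considered. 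This yields a singular inner $\Theta$ supported on $E$ whose model operator $T=S_\Theta$ satisfies all the required properties.
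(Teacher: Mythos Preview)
Your overall framework is the right one and matches the paper: take $T=S_\theta$ for a singular inner function $\theta$ with measure supported in $E$; then $\rank(I-S_\theta^*S_\theta)=1$, $\sigma(S_\theta)=\supp\nu\subset E$, and $\sup_n\|S_\theta^{-n}\|=\infty$ all follow from standard model-space facts, exactly as in the paper.

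The genuine gap is in the construction of $\mu$. An \emph{atomic} singular measure can never deliver arbitrarily slow growth of $\|S_\theta^{-n}\|$. If $\mu$ has an atom of mass $c>0$ at $\zeta$, then for $z=r\zeta$ the Poisson kernel gives
\[
-\log|\theta(r\zeta)|\ \ge\ c\,\frac{1+r}{1-r},
\]
so $|\theta(r\zeta)|\le\exp\bigl(-c(1+r)/(1-r)\bigr)$. Balancing this against $|z|^n=r^n$ (taking $1-r\asymp\sqrt{c/n}$) yields
\[
\delta_n(\theta)=\inf_{z\in\DD}\max\{|z|^n,|\theta(z)|\}\ \le\ \exp\bigl(-\kappa\sqrt{cn}\,\bigr)
\]
for an absolute constant $\kappa>0$. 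By the lower bound $\|S_\theta^{-n}\|\ge\frac12(\delta_n(\theta)^{-1}-1)$ (Corollary~\ref{C:estimates} in the paper), this forces $\|S_\theta^{-n}\|\gtrsim\exp(\kappa\sqrt{cn})$. In particular, as soon as you commit to a first atom of mass $c_1>0$, no later choices can bring the growth below $\exp(\kappa\sqrt{c_1 n})$; your diagonal argument (``choose $c_k$ small enough to preserve the bound for $n\le k$'') cannot control large $n$, and for, say, $u_n=n$ the construction fails outright. The same computation shows your Cauchy-estimate route through $\|q_n\|_\infty$ is doomed for atomic $\mu$: the quantity $\sup_{|z|=r}|1/\theta(z)|=1/m_\theta(r)$ blows up like $\exp(c/(1-r))$ near each atom.

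What the paper does instead is invoke a nontrivial input (Theorem~\ref{T:slowdecay}, based on \cite{Ra21}): given \emph{any} closed $E\subset\TT$ with $|E|>0$ and any sequence $\eta_n\to0$, there exists a finite positive singular measure $\nu$ on $E$ whose inner function $\theta$ satisfies $\delta_n(\theta)\ge\eta_n$ for all $n$. This $\nu$ is necessarily continuous (no atoms), and its existence is precisely where the hypothesis $|E|>0$ is used; it is not a matter of $E$ failing Beurling--Carleson. With such a $\theta$ in hand, the upper bound in Corollary~\ref{C:estimates} gives $\|S_\theta^{-n}\|\le A\delta_n(\theta)^{-2}\log(2/\delta_n(\theta))\le u_n$ once $\eta_n$ is chosen appropriately, completing the proof.
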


The proofs of Theorems~\ref{T:main} and \ref{T:converse} both proceed via a study
of compressed shift operators. 
In \S\ref{S:compressedshifts} we recall the basic properties of these operators,
and derive estimates for the norms of their negative powers. These estimates depend on the
decay properties of singular inner functions, which we study in detail in \S\ref{S:inner}.
These results are then applied in \S\ref{S:return}, both to prove Theorem~\ref{T:converse}
and to obtain Theorem~\ref{T:main} for the case of compressed shifts. 
The general case of Theorem~\ref{T:main} is proved using similar ideas, 
based on the Sz-Nagy--Foias theory of functional models. This theory is presented
in \S\ref{S:functmodels}, 
and then applied in \S\ref{S:mainproof} to complete the proof Theorem~\ref{T:main}.
We conclude in \S\ref{S:conclusion} with a brief discussion of what
would be needed to obtain a complete proof of Conjecture~\ref{Cj:Esterle}.


\section{Compressed shifts}\label{S:compressedshifts}

Let $H^2$ be the classical Hardy space on the unit disk $\DD$, namely the Hilbert space of holomorphic functions 
$f(z)=\sum_{n\ge0}a_nz^n$ such that $\|f\|_{2}^2:=\sum_{n\ge0}|a_n|^2<\infty$. 
We denote by $S:H^2\to H^2$ the shift operator, defined by $Sf(z):=zf(z)$. 

Throughout this section, $\theta$ denotes a (non-constant) inner function on $\DD$, 
namely a bounded holomorphic function whose radial boundary values satisfy  
$|\theta|=1$ a.e.\ on $\TT$.
We write $K_\theta:=H^2\ominus\theta H^2$,
the orthogonal complement of $\theta H^2$ in $H^2$,
and denote by $P_{K_\theta}$  the orthogonal projection of $H^2$ onto $K_\theta$.

The operator $S_\theta:=P_{K_\theta}S|_{K_\theta}$ 
is called the \emph{compressed shift} on $K_\theta$.
In this section, we study some of its  properties. 
To the best of our knowledge, this operator was first
studied by Sarason in \cite{Sa67}.
Our treatment mostly follows that in Nikol'ski\u{\i}'s book \cite{Ni86}.


\subsection{Basic results}

We begin with the following result, 
which is an $H^\infty$-functional calculus for $S_\theta$. 
Here $H^\infty$ denotes the algebra of bounded holomorphic functions on~$\DD$, 
equipped with the norm $\|f\|_{\infty}:=\sup_{z\in\DD}|f(z)|$. 

\begin{theorem}\label{T:functcalc}
For $\phi\in H^\infty$, define the operator $\phi(S_\theta): K_\theta\to K_\theta$ by
\[
\phi(S_\theta)f:=P_{K_\theta}(\phi f) \quad(f\in K_\theta).
\]
Then the map $\phi\mapsto\phi(S_\theta)$ is a contractive homomorphism 
from $H^\infty$ into $\cL(K_\theta)$ such that $z\mapsto S_\theta$. 
Moreover, $\phi(S_\theta)=0$ if and only if $\phi\in\theta H^\infty$.
\end{theorem}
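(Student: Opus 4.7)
The plan is to work throughout with the orthogonal decomposition $H^2=K_\theta\oplus\theta H^2$, writing $P:=P_{K_\theta}$, $Q:=I-P$, and $M_\phi$ for multiplication by $\phi$ on $H^2$. The single observation that drives everything is that $\theta H^2$ is invariant under $M_\phi$ for every $\phi\in H^\infty$, since $\phi(\theta g)=\theta(\phi g)\in\theta H^2$. With this in hand, linearity of $\phi\mapsto\phi(S_\theta)$, contractivity, and the assignment $z\mapsto S_\theta$ are essentially definitional: $\|\phi(S_\theta)f\|=\|PM_\phi f\|\le\|\phi\|_\infty\|f\|$ for $f\in K_\theta$, and the other two are direct.

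For multiplicativity, the $M_\phi$-invariance of $\theta H^2$ means that $M_\phi$ is upper triangular with respect to $H^2=K_\theta\oplus\theta H^2$, with $(1,1)$-entry precisely $\phi(S_\theta)$. The $(1,1)$-entry of a product of upper-triangular operators is the product of the $(1,1)$-entries, giving $(\phi\psi)(S_\theta)=\phi(S_\theta)\psi(S_\theta)$. A direct verification is equally short: for $f\in K_\theta$,
\[
\phi(S_\theta)\psi(S_\theta)f=P\bigl(\phi P(\psi f)\bigr)=P(\phi\psi f)-P\bigl(\phi Q(\psi f)\bigr),
\]
and the last term vanishes because $Q(\psi f)\in\theta H^2$, whence $\phi Q(\psi f)\in\theta H^2=\ker P$.

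Finally, for the kernel description, the direction $\phi\in\theta H^\infty\Rightarrow\phi(S_\theta)=0$ is immediate, since then $\phi f\in\theta H^2=\ker P$ for every $f\in K_\theta$. For the converse, if $\phi(S_\theta)=0$ then $\phi f\in\theta H^2$ for every $f\in K_\theta$, and since $M_\phi$ also preserves $\theta H^2$ and $H^2=K_\theta+\theta H^2$, it follows that $\phi H^2\subseteq\theta H^2$; evaluating at $1\in H^2$ gives $\phi=\theta\psi$ with $\psi\in H^2$. The one point requiring a moment's care---and the nearest the proof comes to an obstacle---is the upgrade from $\psi\in H^2$ to $\psi\in H^\infty$: since $|\theta|=1$ a.e.\ on $\TT$, the non-tangential boundary values satisfy $|\psi|=|\phi|\le\|\phi\|_\infty$ a.e., and a bounded $H^2$-function lies in $H^\infty$ with the same bound.
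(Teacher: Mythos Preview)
Your argument is correct. The paper itself does not give a proof of this theorem at all---it simply cites Nikol'ski\u{\i}'s book \cite[Chapter~III, \S2]{Ni86}---so there is no in-paper approach to compare against; your self-contained treatment via the $M_\phi$-invariance of $\theta H^2$ is the standard one, and each step (contractivity, multiplicativity, and the kernel description with the $H^2\to H^\infty$ upgrade using $|\theta|=1$ a.e.) is sound.

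One cosmetic remark: with the ordering $H^2=K_\theta\oplus\theta H^2$ and $\theta H^2$ invariant, the block matrix of $M_\phi$ is \emph{lower} triangular, not upper. This does not affect your conclusion, since for lower-triangular block operators the $(1,1)$-entry of a product is still the product of the $(1,1)$-entries; and in any case your direct verification via $P(\phi Q(\psi f))=0$ settles multiplicativity independently of the triangular-matrix heuristic.
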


\begin{proof}
See \cite[Chapter~III, \S2, p.64]{Ni86}.
\end{proof}

The following result,
originally due to Sarason \cite{Sa67},
 gives a precise value for the operator norm of $\phi(S_\theta)$.

\begin{theorem}\label{T:norm}
For $\phi\in H^\infty$, we have
\begin{equation}\label{E:norm}
\|\phi(S_\theta)\|=\inf_{h\in H^\infty} \|\phi+\theta h\|_\infty.
\end{equation}
Moreover, the infimum is attained.
\end{theorem}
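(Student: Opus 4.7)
The plan is to prove the two inequalities in \eqref{E:norm} separately; the harder direction will also deliver the attainment statement. For the easy direction $\|\phi(S_\theta)\|\le\inf_{h\in H^\infty}\|\phi+\theta h\|_\infty$, I would argue directly from Theorem~\ref{T:functcalc}. For every $h\in H^\infty$ we have $\theta h\in\theta H^\infty$, so $(\theta h)(S_\theta)=0$. Since $\phi\mapsto\phi(S_\theta)$ is a contractive homomorphism, this gives $\phi(S_\theta)=(\phi+\theta h)(S_\theta)$, whence
\[
\|\phi(S_\theta)\|=\|(\phi+\theta h)(S_\theta)\|\le\|\phi+\theta h\|_\infty.
\]
Taking the infimum over $h$ yields the inequality.

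For the reverse inequality $\inf_h\|\phi+\theta h\|_\infty\le\|\phi(S_\theta)\|$, together with the attainment claim, the natural route is via the Sz-Nagy--Foias commutant lifting theorem. The shift $S$ on $H^2$ is an isometric dilation of $S_\theta$, since Theorem~\ref{T:functcalc} applied to $\phi(z)=z^n$ gives $S_\theta^n=P_{K_\theta}S^n|_{K_\theta}$ for every $n\ge0$. Moreover $\phi(S_\theta)$ commutes with $S_\theta$, again by the homomorphism property. Commutant lifting then produces a bounded operator $B$ on $H^2$ that commutes with $S$, satisfies $\|B\|=\|\phi(S_\theta)\|$, and lifts $\phi(S_\theta)$ in the sense that $\phi(S_\theta)=P_{K_\theta}B|_{K_\theta}$. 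Since the commutant of $S$ on $H^2$ consists exactly of the multiplication operators $M_\psi$ with $\psi\in H^\infty$ (and $\|M_\psi\|=\|\psi\|_\infty$), there is a $\psi\in H^\infty$ with $\|\psi\|_\infty=\|\phi(S_\theta)\|$ and $\psi(S_\theta)=\phi(S_\theta)$. A second appeal to Theorem~\ref{T:functcalc} applied to $\phi-\psi$ then gives $\phi-\psi=\theta h_0$ for some $h_0\in H^\infty$, so $\|\phi-\theta h_0\|_\infty=\|\psi\|_\infty=\|\phi(S_\theta)\|$. This simultaneously yields $\inf_h\|\phi+\theta h\|_\infty\le\|\phi(S_\theta)\|$ and exhibits $h=-h_0$ as a minimiser.

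The main obstacle in this approach is the invocation of commutant lifting itself, which is a substantial piece of dilation theory. A self-contained alternative is to prove the reverse inequality by duality: one verifies that $\langle\phi(S_\theta)f,g\rangle=\int_\TT\phi f\bar g\,dm$ for $f,g\in K_\theta$, that this integral is unchanged when $\phi$ is replaced by $\phi+\theta h$ (since $\theta hf\in\theta H^2$ is orthogonal to $g\in K_\theta$), and then uses a Hahn--Banach / predual argument to identify $\|\phi(S_\theta)\|$ with the quotient norm of $[\phi]$ in $H^\infty/\theta H^\infty$; attainment then follows from weak-$*$ compactness of the unit ball of $H^\infty$. In either case the result is classical and is treated in Chapter~III of Nikol'ski\u{\i}'s book.
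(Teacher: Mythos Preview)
Your argument is correct. The easy inequality follows exactly as you say from Theorem~\ref{T:functcalc}, and the commutant-lifting argument for the reverse inequality is valid: $S$ is an isometric dilation of $S_\theta$ (because $\theta H^2$ is $S$-invariant, so $K_\theta$ is semi-invariant and $S_\theta^n=P_{K_\theta}S^n|_{K_\theta}$), the lifted operator must be multiplication by some $\psi\in H^\infty$ with $\|\psi\|_\infty=\|\phi(S_\theta)\|$, and then the kernel statement in Theorem~\ref{T:functcalc} forces $\phi-\psi\in\theta H^\infty$. Your duality alternative is also sound and is in fact closer in spirit to Sarason's original argument.

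As for comparison with the paper: the paper does not give a proof at all, but simply cites \cite[Proposition~2.1]{Sa67} and \cite[Lecture~VIII, p.182]{Ni86}. So you have supplied strictly more than the paper does. One historical remark worth bearing in mind: Sarason's theorem \emph{preceded} the Sz.-Nagy--Foias commutant lifting theorem and was its main motivation, so invoking commutant lifting here is, while perfectly legitimate, logically the long way round. If you want the argument to be genuinely self-contained, the duality route you sketch (identifying $\|\phi(S_\theta)\|$ with the quotient norm in $H^\infty/\theta H^\infty$ via the $L^1/H^1_0$ predual and using weak-$*$ compactness for attainment) is preferable, and it is essentially what one finds in the cited sources.
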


\begin{proof}
See \cite[Proposition~2.1]{Sa67} or \cite[Lecture~VIII, p.182]{Ni86}.
\end{proof}

The next result is known as the Liv\v sic--Moeller theorem.

\begin{theorem}\label{T:LM}
The spectrum and point spectrum of $S_\theta$ are given respectively by
\[
\sigma(S_\theta)=Y_\theta\cup Z_\theta
\quad\text{and}\quad
\sigma_p(S_\theta)=Z_\theta,
\]
where $Y_\theta$ is the set of singular points of $\theta$ in $\TT$, 
and $Z_\theta$ is the set of zeros of $\theta$ in $\DD$.
\end{theorem}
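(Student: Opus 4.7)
My plan is to establish the two equalities by splitting $\overline\DD$ into $\DD$ and $\TT$ and handling both using the functional calculus of Theorem~\ref{T:functcalc}; the statement about the point spectrum will then follow as a byproduct.

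For the easy inclusions, I work inside $\DD$ first. Given $\lambda\in Z_\theta$, the function $f_\lambda(z):=\theta(z)/(z-\lambda)$ is bounded and holomorphic on $\DD$ (the apparent pole at $\lambda$ is cancelled by the zero of $\theta$); a short boundary-integral calculation using $|\theta|=1$ a.e.\ on $\TT$ shows $f_\lambda\in K_\theta$, and since $(S-\lambda)f_\lambda=\theta\in\theta H^2$, projecting gives $S_\theta f_\lambda=\lambda f_\lambda$, so $Z_\theta\subseteq\sigma_p(S_\theta)$. Conversely, if $\lambda\in\DD$ and $\theta(\lambda)\ne 0$, then
\[
\psi(z):=\frac{1-\theta(z)/\theta(\lambda)}{z-\lambda}
\]
has a removable singularity at $\lambda$ and lies in $H^\infty$ (the bound on $\TT$ extends to $\DD$ by the maximum principle), and the identity $(z-\lambda)\psi+\theta/\theta(\lambda)\equiv 1$ combined with $\theta(S_\theta)=0$ gives $(S_\theta-\lambda I)\psi(S_\theta)=I$. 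The same construction works for $\zeta\in\TT\setminus Y_\theta$: by the definition of $Y_\theta$, $\theta$ extends analytically across $\zeta$ with $|\theta(\zeta)|=1$, so $\psi$ is again bounded and the argument goes through unchanged. This gives $\sigma(S_\theta)\subseteq Y_\theta\cup Z_\theta$.

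The main step is the reverse inclusion $Y_\theta\subseteq\sigma(S_\theta)$, which I will handle by producing approximate eigenvectors of $S_\theta^*$. Since $S$ preserves $\theta H^2$, the subspace $K_\theta$ is $S^*$-invariant and $S_\theta^*=S^*|_{K_\theta}$. For $\lambda\in\DD$ let
\[
k_\lambda^\theta(z):=\frac{1-\overline{\theta(\lambda)}\theta(z)}{1-\bar\lambda z}
\]
be the reproducing kernel of $K_\theta$ at $\lambda$, which satisfies $\|k_\lambda^\theta\|^2=(1-|\theta(\lambda)|^2)/(1-|\lambda|^2)$. A direct computation using $S^*h(z)=(h(z)-h(0))/z$ yields
\[
S^*k_\lambda^\theta=\bar\lambda\,k_\lambda^\theta-\overline{\theta(\lambda)}\,S^*\theta,
\]
and a quick boundary-integral check (using $|\theta|=1$ a.e.) shows $S^*\theta\in K_\theta$ with $\|S^*\theta\|\le 1$. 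Normalising $\hat k_\lambda^\theta:=k_\lambda^\theta/\|k_\lambda^\theta\|$, I obtain
\[
\bigl\|(S_\theta^*-\bar\lambda I)\hat k_\lambda^\theta\bigr\|\le |\theta(\lambda)|\sqrt{\frac{1-|\lambda|^2}{1-|\theta(\lambda)|^2}}.
\]
For $\zeta\in Y_\theta$, a classical property of the canonical factorisation of inner functions supplies a sequence $z_n\to\zeta$ in $\DD$ with $\theta(z_n)\to 0$; the right side of the estimate then tends to $0$, so $\hat k_{z_n}^\theta$ is an approximate eigenvector of $S_\theta^*$ for the eigenvalue $\bar\zeta$, and hence $\zeta\in\sigma(S_\theta)$.

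To finish the identification of the point spectrum, I rule out eigenvalues on $\TT$: if $S_\theta f=\lambda f$ with $|\lambda|=1$, then $\|Sf\|=\|f\|=\|S_\theta f\|$ forces $Sf$ to lie entirely in $K_\theta$, whence $Sf=S_\theta f=\lambda f$, so $(z-\lambda)f\equiv 0$ on $\DD$ and $f\equiv 0$. Combining everything gives $\sigma(S_\theta)=Y_\theta\cup Z_\theta$ and $\sigma_p(S_\theta)=Z_\theta$. The principal obstacle in this plan is the boundary input invoked in the previous paragraph: the existence of $z_n\to\zeta$ with $\theta(z_n)\to 0$ at each singular point $\zeta$. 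This is standard but not a formal consequence of the functional calculus; it rests on the Blaschke–singular factorisation and on the fact that an inner function tends to zero along some sequence approaching each point where its zeros or the mass of its singular measure accumulate.
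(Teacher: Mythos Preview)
Your argument is correct. The paper itself does not prove this result but simply cites \cite[Lecture~III, \S1, p.62]{Ni86}, so there is no in-paper approach to compare against. What you have written is essentially the standard proof: an explicit resolvent via the B\'ezout identity $(z-\lambda)\psi+\theta/\theta(\lambda)=1$ and the functional calculus outside $Y_\theta\cup Z_\theta$; explicit eigenvectors $\theta(z)/(z-\lambda)$ at the zeros; reproducing kernels $k_\lambda^\theta$ as approximate eigenvectors of $S_\theta^*$ near boundary singularities; and the isometry trick to exclude unimodular eigenvalues. The one external ingredient you rightly flag---that at each singular point $\zeta$ there exist $z_n\to\zeta$ in $\DD$ with $\theta(z_n)\to 0$---is precisely the classical characterisation of the boundary spectrum of an inner function, and any proof of the Liv\v sic--Moeller theorem ultimately rests on it; it is also what is used in the cited reference.
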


\begin{proof}
See \cite[Lecture~III, \S1, p.62]{Ni86}.
\end{proof}

We conclude this subsection with two results on the adjoint $S_\theta^*$ of $S_\theta$.

\begin{theorem}\label{T:adjoint}
For all $f\in K_\theta$, we have $\|S_\theta^{*n}f\|_2\to0$ as $n\to\infty$.
\end{theorem}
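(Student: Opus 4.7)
The plan is to reduce the statement to the standard fact that the backward shift on $H^2$ is pointwise strongly stable. The key identification is $S_\theta^* = S^*|_{K_\theta}$, after which the result follows from a direct Taylor-coefficient calculation.

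First I would observe that $\theta H^2$ is invariant under $S$ (multiplication by $z$ preserves it), so its orthogonal complement $K_\theta$ is invariant under $S^*$. Then, for any $f,g\in K_\theta$,
\[
\langle S_\theta f,g\rangle = \langle P_{K_\theta}Sf,g\rangle = \langle Sf,g\rangle = \langle f,S^*g\rangle,
\]
where the second equality uses $g\in K_\theta$. Since $S^*g\in K_\theta$ by the invariance just noted, this identifies $S_\theta^*$ with $S^*|_{K_\theta}$, and hence $S_\theta^{*n}f = S^{*n}f$ for every $f\in K_\theta$ and every $n\ge 0$.

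It then suffices to show that $\|S^{*n}f\|_2\to 0$ for every $f\in H^2$. If $f(z)=\sum_{k\ge 0}a_k z^k$, then $S^{*n}f(z)=\sum_{k\ge 0}a_{k+n}z^k$, so
\[
\|S^{*n}f\|_2^2 = \sum_{k\ge n}|a_k|^2,
\]
and this tends to $0$ as $n\to\infty$ by tail convergence of the series $\sum_k |a_k|^2 = \|f\|_2^2 < \infty$.

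There is no substantial obstacle here: the theorem is essentially the statement that $S^*$ (and hence its restriction to any invariant subspace) is a $C_{\cdot 0}$-contraction on $H^2$, an elementary consequence of the orthonormality of the monomials $\{z^k\}_{k\ge 0}$ in $H^2$.
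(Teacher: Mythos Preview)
Your proof is correct and follows essentially the same approach as the paper: identify $S_\theta^*$ with $S^*|_{K_\theta}$ and then use that $S^{*n}\to0$ strongly on $H^2$. The paper's proof simply cites a reference for the identity $S_\theta^*=S^*|_{K_\theta}$, whereas you spell out the short calculation; otherwise the arguments are the same.
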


\begin{proof}
A standard calculation(see e.g.\ \cite[p.62]{Ni86}) shows that $S_\theta^*=S^*|_{K_\theta}$,
where $S^*$ is the $H^2$-adjoint of $S$. Hence, for each $f\in K_\theta$,
we have $\|S_\theta^{*n}f\|_2=\|S^{*n}f\|_2\to0$ as $n\to\infty$.
\end{proof}

\begin{theorem}\label{T:defect}
The operator $I-S_\theta^*S_\theta$ has rank one.
\end{theorem}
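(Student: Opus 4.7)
The plan is to establish the explicit rank-one identity
\[
(I-S_\theta^*S_\theta)f \;=\; \langle f,S^*\theta\rangle_{H^2}\, S^*\theta \qquad(f\in K_\theta),
\]
where $S^*\theta=(\theta-\theta(0))/z$. Since $\theta$ is nonconstant, $S^*\theta$ is a nonzero vector, so once I check that $S^*\theta\in K_\theta$ the functional $\langle\cdot,S^*\theta\rangle$ is not identically zero on $K_\theta$ and the operator has rank exactly one. To derive the identity I will first use $S_\theta^*=S^*|_{K_\theta}$ (established in the proof of Theorem~\ref{T:adjoint}) together with the orthogonal decomposition $Sf=P_{K_\theta}(Sf)+P_{\theta H^2}(Sf)$ and the isometric identity $S^*S=I$ to obtain $(I-S_\theta^*S_\theta)f = S^*P_{\theta H^2}(zf)$ for $f\in K_\theta$.

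The crucial observation is that the orthogonality $f\perp\theta H^2$ in $H^2$ translates on the circle to $\bar\theta f\perp H^2$ in $L^2(\TT)$, which means that $\bar\theta f$ has only strictly negative Fourier coefficients. Hence $z\bar\theta f$ has Fourier support in $\{n\le 0\}$, so writing the projection onto $\theta H^2$ as $M_\theta P_+ M_{\bar\theta}$ (with $P_+$ the Riesz projection onto $H^2$) and applying $P_+$ reduces $z\bar\theta f$ to a scalar multiple of the constant function $1$. Concretely, $P_{\theta H^2}(zf)=c(f)\theta$ with $c(f):=\int_\TT z\bar\theta f\, dm$, so $(I-S_\theta^*S_\theta)f=c(f)\,S^*\theta$. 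This already yields $\rank(I-S_\theta^*S_\theta)\le 1$.

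It then remains to identify $c$ and to verify that $S^*\theta\in K_\theta$. For the former I would rewrite $c(f)=\langle f,\bar z\theta\rangle_{L^2}$ and split $\bar z\theta=\theta(0)z^{-1}+S^*\theta$ on $\TT$; the $z^{-1}$ piece is annihilated by $f\in H^2$, leaving $c(f)=\langle f,S^*\theta\rangle_{H^2}$. For the latter, a direct Fourier-support check shows $\langle S^*\theta,\theta h\rangle=0$ for every $h\in H^2$. Substituting $f=S^*\theta$ into the identity then yields $(I-S_\theta^*S_\theta)(S^*\theta)=\|S^*\theta\|^2\,S^*\theta\ne 0$, completing the argument. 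The whole proof is essentially Fourier bookkeeping on~$\TT$; the one point worth flagging is that $\bar\theta f$ has \emph{strictly} negative (not merely nonpositive) Fourier support, since this is exactly what forces $P_+(z\bar\theta f)$ into the one-dimensional space $\CC\cdot 1$ and produces the rank-one structure.
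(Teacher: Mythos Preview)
Your argument is correct. The paper does not actually prove this statement; it simply cites \cite[Corollary~1.5]{Be88}. What you have written is essentially the standard direct computation behind that citation: using $S_\theta^*=S^*|_{K_\theta}$ and $S^*S=I$ to reduce $(I-S_\theta^*S_\theta)f$ to $S^*P_{\theta H^2}(zf)$, then invoking the boundary characterization $f\in K_\theta\iff\bar\theta f\in\overline{zH^2}$ to see that $P_{\theta H^2}(zf)=c(f)\theta$ is one-dimensional. Your identification $c(f)=\langle f,S^*\theta\rangle$ and the check that $S^*\theta\in K_\theta\setminus\{0\}$ are both correct, so you obtain rank exactly one. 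In short, you have supplied the content that the paper defers to the literature; nothing is missing.
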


\begin{proof}
See e.g.\ \cite[Corollary~1.5]{Be88}.
\end{proof}


\subsection{Norms of negative powers}

If $\theta(0)\ne0$, then, by Theorem~\ref{T:LM}, the operator $S_\theta$ is invertible.
We are interested in computing or estimating the norms of negative powers of $S_\theta$.
We begin with a very simple result.

\begin{theorem}\label{T:negpowers}
We have $\|S_\theta^{-n}\|\to\infty$ as $n\to\infty$.
\end{theorem}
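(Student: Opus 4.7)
The plan is to exploit the strong convergence of $S_\theta^{*n}$ to $0$ (Theorem~\ref{T:adjoint}) together with the fact that, under the standing assumption $\theta(0)\ne 0$, the operator $S_\theta$ (hence also $S_\theta^*$) is invertible. Concretely, I would look at the reverse inequality $\|f\|\le \|(S_\theta^*)^{-n}\|\,\|S_\theta^{*n}f\|$, valid for every $f$, and combine the two facts.

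First, since $\theta$ is a non-constant inner function, $K_\theta\ne\{0\}$ (for instance, the reproducing kernel $(1-\overline{\theta(w)}\theta(z))/(1-\bar w z)$ is non-zero at any $w\in\DD$ with $|\theta(w)|<1$, and such $w$ exists by the maximum principle). Pick any $f\in K_\theta\setminus\{0\}$. By Theorem~\ref{T:adjoint}, $\|S_\theta^{*n}f\|_2\to 0$ as $n\to\infty$. Because $S_\theta$ is invertible, so is $S_\theta^*$, and therefore $S_\theta^{*n}f\ne 0$ for every $n$.

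Now the identity $f=(S_\theta^*)^{-n}S_\theta^{*n}f$ gives
\[
\|f\|_2 \le \|(S_\theta^*)^{-n}\|\,\|S_\theta^{*n}f\|_2,
\]
so
\[
\|S_\theta^{-n}\| = \|(S_\theta^*)^{-n}\| \ge \frac{\|f\|_2}{\|S_\theta^{*n}f\|_2}\longrightarrow \infty.
\]
This yields the desired conclusion. There is no real obstacle here; the only small point to check is that $K_\theta\ne\{0\}$ so that a nonzero test vector $f$ is available, and that $S_\theta^{*n}f$ stays nonzero so the division is legitimate—both handled above via non-constancy of $\theta$ and invertibility of $S_\theta^*$.
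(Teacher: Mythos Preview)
Your argument is correct and is exactly the approach taken in the paper: use Theorem~\ref{T:adjoint} to get $S_\theta^{*n}\to0$ strongly, deduce $\|(S_\theta^*)^{-n}\|\to\infty$, and conclude via $\|S_\theta^{-n}\|=\|(S_\theta^*)^{-n}\|$. The paper states this in one line, whereas you have spelled out the implication via a test vector and verified $K_\theta\ne\{0\}$; these extra details are fine but not different in substance.
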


\begin{proof}
By Theorem~\ref{T:adjoint}, $S_\theta^{*n}\to0$ strongly.
It follows that $\|(S_\theta^*)^{-n}\|\to\infty$, and hence that $\|S_\theta^{-n}\|\to\infty$.
\end{proof}

The exact value of $\|S_\theta^{-n}\|$,
and more generally that of $\|\phi(S_\theta)^{-1}\|$ 
for $\phi\in H^\infty$, can be expressed 
in terms of the solution to a corona-type problem.
The connection between the spectral properties of
the compressed shift and the corona theorem is not new:
it appears for example in \cite[Lecture~III]{Ni86}.
We establish the following result.

\begin{theorem}\label{T:corona}
Let $\phi\in H^\infty$, and suppose that $\phi(S_\theta)$ is invertible. Then
\begin{equation}\label{E:corona}
\|\phi(S_\theta)^{-1}\|=\inf\Bigl\{\|g\|_\infty: g,h\in H^\infty, \phi g+\theta h=1\Bigr\}.
\end{equation}
Moreover, the infimum is attained.
\end{theorem}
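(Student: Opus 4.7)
The plan is to translate invertibility of $\phi(S_\theta)$ into a Bezout-type identity $\phi g+\theta h=1$ via the $H^\infty$-functional calculus of Theorem~\ref{T:functcalc}, and then to identify $\|\phi(S_\theta)^{-1}\|$ with the optimal $\|g\|_\infty$ by combining the Sarason norm formula (Theorem~\ref{T:norm}) with a commutant characterization of $S_\theta$.

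For the inequality $\|\phi(S_\theta)^{-1}\|\le\inf\|g\|_\infty$, assume $\phi g+\theta h=1$ with $g,h\in H^\infty$. Apply Theorem~\ref{T:functcalc}: since it is a homomorphism and $\theta(S_\theta)=0$, we obtain
\[
\phi(S_\theta)\,g(S_\theta)=I-\theta(S_\theta)h(S_\theta)=I.
\]
Commutativity of $H^\infty$ gives also $g(S_\theta)\phi(S_\theta)=I$, so $g(S_\theta)=\phi(S_\theta)^{-1}$, and contractivity of the calculus yields $\|\phi(S_\theta)^{-1}\|\le\|g\|_\infty$. Taking the infimum over all admissible $g$ proves the easy direction.

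The reverse direction and the attainment both hinge on producing a single $g_0\in H^\infty$ with $g_0(S_\theta)=\phi(S_\theta)^{-1}$. Because $\phi(S_\theta)$ commutes with $S_\theta=z(S_\theta)$, its inverse does as well; hence $\phi(S_\theta)^{-1}$ lies in the commutant of $S_\theta$. By the commutant theorem for the compressed shift (see \cite[Lecture~III]{Ni86}), this commutant coincides with $\{g(S_\theta):g\in H^\infty\}$, so $g_0$ exists. The kernel clause of Theorem~\ref{T:functcalc} applied to $\phi g_0-1$ then produces $h_0\in H^\infty$ with $\phi g_0+\theta h_0=1$. To achieve the infimum, invoke Theorem~\ref{T:norm} to find $h'\in H^\infty$ with $\|g_0+\theta h'\|_\infty=\|g_0(S_\theta)\|=\|\phi(S_\theta)^{-1}\|$; setting $g:=g_0+\theta h'$ and $h:=h_0-\phi h'$ gives $\phi g+\theta h=\phi g_0+\theta h_0=1$ with $\|g\|_\infty=\|\phi(S_\theta)^{-1}\|$, yielding simultaneously the $\ge$ inequality and the fact that the infimum is attained.

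The main obstacle is the appeal to the commutant characterization of $S_\theta$ to realize $\phi(S_\theta)^{-1}$ inside the range of the functional calculus; once this is in hand, the rest is routine algebraic manipulation with the homomorphism $\phi\mapsto\phi(S_\theta)$ and the Sarason norm formula.
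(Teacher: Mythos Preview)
Your proof is correct, but it takes a genuinely different route from the paper's. The paper produces the initial Bezout solution $\phi g_0+\theta h_0=1$ by first invoking the spectral mapping theorem for $S_\theta$ to conclude $\inf_{\DD}(|\phi|+|\theta|)>0$, and then applying Carleson's corona theorem. You bypass the corona theorem entirely: since $\phi(S_\theta)^{-1}$ commutes with $S_\theta$, Sarason's commutant theorem already supplies $g_0\in H^\infty$ with $g_0(S_\theta)=\phi(S_\theta)^{-1}$, and the kernel description in Theorem~\ref{T:functcalc} then furnishes $h_0$. After that, both arguments finish the same way, optimizing $g_0$ modulo $\theta H^\infty$ via Theorem~\ref{T:norm}.

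Your approach is more self-contained: it stays inside the Sarason circle of ideas (commutant lifting and the norm formula are two faces of the same theorem) and avoids importing the deep corona machinery. The paper's approach, on the other hand, makes the connection to the corona problem explicit, which is thematically useful since the very next result, Theorem~\ref{T:estimates}, relies on quantitative corona bounds. Either way, the algebra after obtaining $(g_0,h_0)$ is identical.
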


\begin{proof}
Suppose first that $g,h\in H^\infty$ with $\phi g+\theta h=1$. 
By the functional calculus, Theorem~\ref{T:functcalc}, 
we have $\phi(S_\theta) g(S_\theta)= g(S_\theta)\phi(S_\theta)=I$. Hence
\[
\|\phi(S_\theta)^{-1}\|=\|g(S_\theta)\|\le\|g\|_\infty.
\]

Now, as $\phi(S_\theta)$ is invertible,
it follows by the spectral mapping theorem in \cite[Lecture~III, p.66]{Ni86} that $\inf_{\DD}(|\phi|+|\theta|)>0$.
By the corona theorem, there exist $g_0,h_0\in H^\infty$
such that $\phi g_0+\theta h_0=1$.
By a normal-family argument, there exists $f_0\in H^\infty$
such that $\|g_0\|_{H^\infty/\theta H^\infty}=\|g_0-\theta f_0\|_\infty$.
Set 
\[
g:=g_0-\theta f_0
\quad\text{and}\quad
h:=h_0+\phi f_0.
\]
Then $g,h\in H^\infty$ and
$\phi g+\theta h=1$, so as before $\phi(S_\theta)^{-1}=g(S_\theta)$.
Also, since $\|g\|_{H^\infty/\theta H^\infty}=\|g\|_\infty$, Theorem~\ref{T:norm} gives
\[
\|\phi(S_\theta)^{-1}\|=\|g(S_\theta)\|=\|g\|_{H^\infty/\theta H^\infty}=\|g\|_\infty.
\]
Thus the infimum in \eqref{E:corona} is attained.
\end{proof}

Using this result, we derive some quantitative estimates for $\|\phi(S_\theta)^{-1}\|$.

\begin{theorem}\label{T:estimates}
Let $\phi\in H^\infty$. If $\phi(S_\theta)$ is invertible and $\|\phi\|_\infty\le 1$, then
\[
\frac{1}{2}\Bigl(\frac{1}{\delta}-1\Bigr)\le \|\phi(S_\theta)^{-1}\|\le A\frac{\log(2/\delta)}{\delta^2},
\]
where $\delta:=\inf_{\DD}\max\{|\phi(z)|,|\theta(z)|\}$ and $A$ is an absolute constant.
\end{theorem}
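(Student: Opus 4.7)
The plan is to prove the two bounds by entirely different means: the lower bound is elementary and exploits the fact that $|\theta|=1$ a.e.\ on $\TT$, while the upper bound reduces to the quantitative corona theorem applied to the pair $(\phi,\theta)$.

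For the lower bound, set $M:=\|\phi(S_\theta)^{-1}\|$. By Theorem~\ref{T:corona} the infimum in \eqref{E:corona} is attained, so there exist $g,h\in H^\infty$ with $\phi g+\theta h=1$ and $\|g\|_\infty=M$. Since $\theta$ is inner, $|\theta|=1$ a.e.\ on $\TT$, and the identity $\theta h=1-\phi g$ therefore yields $|h|=|1-\phi g|\le 1+\|\phi\|_\infty\|g\|_\infty\le 1+M$ a.e.\ on $\TT$, so $\|h\|_\infty\le 1+M$. Evaluating $\phi g+\theta h=1$ at an arbitrary $\lambda\in\DD$ and applying the triangle inequality gives
\[
1\le |\phi(\lambda)|\,M+|\theta(\lambda)|(1+M)\le (2M+1)\max\{|\phi(\lambda)|,|\theta(\lambda)|\}.
\]
Taking the infimum over $\lambda\in\DD$ yields $1\le(2M+1)\delta$, which rearranges to $M\ge \frac{1}{2}(1/\delta-1)$.

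For the upper bound, the hypotheses $\|\phi\|_\infty\le 1$ and $\|\theta\|_\infty\le 1$ (the latter since $\theta$ is inner), together with the definition of $\delta$, place us in the setting of the classical two-generator corona problem with corona constant $\delta$. I would invoke the quantitative corona theorem in its two-generator form (due essentially to Uchiyama, or obtainable from the standard $\bar\partial$-approach via Koosis/Tolokonnikov), which produces $g,h\in H^\infty$ solving $\phi g+\theta h=1$ with $\|g\|_\infty\le A\delta^{-2}\log(2/\delta)$ for an absolute constant $A$. The identity \eqref{E:corona} of Theorem~\ref{T:corona} then immediately gives $\|\phi(S_\theta)^{-1}\|\le\|g\|_\infty\le A\delta^{-2}\log(2/\delta)$.

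The only genuine obstacle is invoking (or proving) the quantitative corona estimate with the stated $\delta^{-2}\log(2/\delta)$ dependence; the lower bound, by contrast, requires no machinery beyond Theorem~\ref{T:corona} and the boundary modulus of $\theta$.
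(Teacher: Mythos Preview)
Your proof is correct and follows essentially the same route as the paper: for the lower bound you use Theorem~\ref{T:corona} to obtain an optimal pair $(g,h)$, bound $\|h\|_\infty\le 1+\|g\|_\infty$ via the boundary modulus of $\theta$ (the paper writes this as $\|h\|_\infty=\|\theta h\|_\infty=\|1-\phi g\|_\infty$, which is the same observation), and then take the infimum of the pointwise estimate over $\DD$; for the upper bound you invoke the quantitative two-generator corona theorem exactly as the paper does, citing \cite[Appendix~3]{Ni86}.
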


\begin{proof}
We begin with the lower bound. 
By Theorem~\ref{T:corona}, 
there exist $g,h\in H^\infty$ with
$\phi g+\theta h=1$ such that $\|\phi(S_\theta)^{-1}\|=\|g\|_\infty$. 
Now, for all $z\in\DD$, we have
\begin{align*}
1&=|\phi(z) g(z)+\theta(z)h(z)|\\
&\le |\phi(z)|\|g\|_\infty+|\theta(z)|\|h\|_\infty\\
&\le \max\{|\phi(z)|,|\theta(z)|\}(\|g\|_\infty+\|h\|_\infty).
\end{align*}
Note also that $\|h\|_\infty\le\|g\|_\infty+1$. Indeed
\[
\|h\|_\infty=\|\theta h\|_\infty=\|1-\phi g\|_\infty\le\|1\|_\infty+\|\phi g\|_\infty\le 1+\|g\|_\infty,
\]
the last inequality because $\|\phi\|_\infty\le1$.
It follows that 
\[
1\le \max\{|\phi(z)|,|\theta(z)|\}(2\|g\|_\infty+1) \quad(z\in\DD).
\]
Taking the infimum of the right-hand side over all $z\in\DD$, we obtain
\[
1\le \delta(2\|g\|_\infty+1),
\]
where $\delta:=\inf_{\DD}\max\{|\phi(z)|,|\theta(z)|\}$.
This gives the lower bound in the theorem.
 
For the upper bound, we invoke a quantitative form of the corona theorem.
According to that theorem, if $a,b\in H^\infty$ and  $\epsilon\le \max\{|a(z)|,|b(z)|\}\le 1$ for all $z\in\DD$,
then there exist $g,h\in H^\infty$ with $ag+bh=1$ 
and $\|g\|_\infty,\|h\|_\infty\le A\epsilon^{-2}\log(2/\epsilon)$,
where $A$ is an absolute constant (see e.g.\ \cite[Appendix 3, p.288]{Ni86}). 
Applying this result, we deduce
that there exist $g,h\in H^\infty$ with $\phi g +\theta h=1$ 
and $\|g\|_\infty,\|h\|_\infty\le A\delta^{-2}\log(2/\delta)$.
Now apply Theorem~\ref{T:corona}.
\end{proof}

We record the special case of
Theorem~\ref{T:estimates} in which $\phi(z)=z^n$.
Note once again that, by Theorem~\ref{T:LM}, the condition that
$\theta(0)\ne0$ implies that $S_\theta$
is invertible.

\begin{corollary}\label{C:estimates}
If $\theta(0)\ne0$, then, for all $n\ge1$,
\begin{equation}\label{E:estimates}
\frac{1}{2}\Bigl(\frac{1}{\delta_n(\theta)}-1\Bigr)\le \|S_\theta^{-n}\|\le A\frac{\log(2/\delta_n(\theta))}{\delta_n(\theta)^2},
\end{equation}
where 
\[
\delta_n(\theta):=\inf_{z\in\DD}\max\{|z|^n,|\theta(z)|\},
\]
and $A$ is an absolute constant.
\end{corollary}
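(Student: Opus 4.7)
The plan is to obtain Corollary~\ref{C:estimates} as a direct specialization of Theorem~\ref{T:estimates}, taking $\phi(z):=z^n$. The task is essentially to verify that each hypothesis of Theorem~\ref{T:estimates} is met and that the quantities appearing there reduce to the ones in the corollary.

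First I would check invertibility of $S_\theta^n$. Under the assumption $\theta(0)\neq 0$, the point $0$ lies neither in the zero set $Z_\theta\subset\DD$ of $\theta$, nor in the singular set $Y_\theta\subset\TT$, so by the Liv\v sic--Moeller theorem (Theorem~\ref{T:LM}) we have $0\notin\sigma(S_\theta)$. Hence $S_\theta$, and therefore $S_\theta^n$, is invertible. Next, by Theorem~\ref{T:functcalc} the map $\phi\mapsto\phi(S_\theta)$ is a homomorphism sending $z$ to $S_\theta$, so $\phi(S_\theta)=S_\theta^n$; in particular $\phi(S_\theta)$ is invertible. The normalization $\|\phi\|_\infty=\sup_{z\in\DD}|z|^n=1$ is immediate, so the second hypothesis of Theorem~\ref{T:estimates} is also met.

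Finally, for this specific choice of $\phi$ the quantity appearing in Theorem~\ref{T:estimates} is
\[
\delta=\inf_{z\in\DD}\max\{|\phi(z)|,|\theta(z)|\}=\inf_{z\in\DD}\max\{|z|^n,|\theta(z)|\}=\delta_n(\theta),
\]
so Theorem~\ref{T:estimates} yields precisely the inequalities \eqref{E:estimates}. There is no genuine obstacle here; the only small verifications are the functional-calculus identity $\phi(S_\theta)=S_\theta^n$ and the use of Theorem~\ref{T:LM} to ensure invertibility, both of which are recorded earlier in the section.
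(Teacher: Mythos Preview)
Your proposal is correct and matches the paper's own approach: the corollary is simply the specialization of Theorem~\ref{T:estimates} to $\phi(z)=z^n$, and the paper does not even supply a separate proof beyond noting this. Your verifications of invertibility via Theorem~\ref{T:LM}, of $\phi(S_\theta)=S_\theta^n$ via the functional calculus, and of $\|\phi\|_\infty\le 1$ are exactly the small checks implicit in that specialization.
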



\section{Decay of singular inner functions}\label{S:inner}

Corollary~\ref{C:estimates} shows that, 
if $\theta$ is an inner function with $\theta(0)\ne0$, then the growth of $\|S_\theta^{-n}\|$ 
is essentially governed by the decay of the sequence
$\delta_n(\theta)$.
In this section we examine the sequence $\delta_n(\theta)$  in detail.

If there exists $w\in\DD\setminus\{0\}$ such that $\theta(w)=0$,
then clearly $\delta_n(\theta)\le |w|^n$ for all $n$. 
Thus, in this case, $\delta_n(\theta)$ decays exponentially fast.

Henceforth we shall be concerned with the more delicate case where $\theta$ has no zeros, 
i.e., it is a singular inner function. 
Thus it has the form
\begin{equation}\label{E:sing}
\theta(z)=\exp\Bigl(-\int_\TT\frac{e^{it}+z}{e^{it}-z}\,d\nu(e^{it})\Bigr)
\quad(z\in\DD),
\end{equation}
where $\nu$ is a finite positive singular measure on $\TT$.
Define 
\[
m_\theta(r):=\inf_{|z|=r}|\theta(z)| \quad(0\le r<1).
\]
Then $m_\theta(r)$ is a strictly decreasing function such that $m_\theta(0)=|\theta(0)|$
and $m_\theta(r)\to0$ as $r\to 1^-$.
Clearly
\begin{equation}\label{E:deltanest}
\delta_n(\theta)\ge r^n \iff m_\theta(r)\ge r^n.
\end{equation}

The following result shows that $\delta_n(\theta)$ can
decay arbitrarily slowly.
We denote by $|\cdot|$ the normalized Lebesgue measure on $\TT$.

\begin{theorem}\label{T:slowdecay}
Let $E$ be a  closed subset of $\TT$ such that $|E|>0$,
and let $(\eta_n)_{n\ge1}$ be a sequence in $(0,1)$ such that $\eta_n\to0$.
Then there exists a finite positive singular measure $\nu$ on $E$ such that,
if $\theta$ is the singular inner function defined by \eqref{E:sing},
then $\delta_n(\theta)\ge\eta_n$ for all $n\ge1$.
\end{theorem}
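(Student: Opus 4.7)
The plan is to reformulate the condition on $\delta_n(\theta)$ as a quantitative bound on the Poisson integral
\[
P_\nu(z):=\int_\TT\frac{1-|z|^2}{|e^{it}-z|^2}\,d\nu(e^{it}),
\]
and then to construct $\nu$ accordingly. Since $|\theta(z)|=e^{-P_\nu(z)}$ and $P_\nu$ is nonnegative and harmonic in $\DD$, the condition $\delta_n(\theta)\ge\eta_n$ is, by \eqref{E:deltanest} and the maximum principle, equivalent to
\[
\max_{|z|=r_n}P_\nu(z)\le\log(1/\eta_n)\qquad(n\ge1),
\]
where $r_n:=\eta_n^{1/n}$. After replacing $\eta_n$ by $\min_{k\le n}\eta_k$ we may assume the sequence is decreasing. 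Setting $u_n:=1/(1-r_n)$ and $v_n:=\log(1/\eta_n)$, the hypothesis $\eta_n\to 0$ forces $v_n\to\infty$; moreover, whenever $u_n\to\infty$ along a subsequence, so does $v_n$ (otherwise the corresponding $\eta_n$ would stay bounded away from $0$). Consequently $\psi(u):=\inf\{v_n:u_n\ge u\}$ is nondecreasing on $(0,\infty)$, satisfies $\psi(u_n)\le v_n$, and tends to infinity as $u\to\infty$.

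The heart of the argument is to construct a nonzero positive finite measure $\nu$ on $E$, singular with respect to Lebesgue measure, satisfying the modulus-of-continuity bound
\[
\nu(I)\le c\,|I|\,\psi(1/|I|)
\]
for every arc $I\subset\TT$, with $c>0$ a fixed small constant. Granting this, a standard dyadic decomposition of the kernel $P_{re^{i\phi}}(e^{it})$ in annuli around $e^{i\phi}$ at geometric scales $2^k(1-r)$ yields $P_\nu(re^{i\phi})\le C_0\,c\,\psi(1/(1-r))$ uniformly in $\phi$, with $C_0$ an absolute constant. Choosing $c:=1/C_0$ and specializing to $r=r_n$ gives $P_\nu(r_n e^{i\phi})\le\psi(u_n)\le v_n=\log(1/\eta_n)$, which is exactly the bound required.

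To construct $\nu$, we exploit $|E|>0$. By the Lebesgue density theorem, pick a point $e^{i\phi_0}\in E$ near which $|E\cap I|/|I|\to 1$ as $|I|\to 0$. We then build iteratively a nested family $\{I_{n,k}\}$ of disjoint subarcs centered in $E$ near $e^{i\phi_0}$, with stage-$n$ arcs of length $\ell_n\downarrow 0$ and branching calibrated so that (i) the compact set $K:=\bigcap_n\bigcup_k I_{n,k}$ has Lebesgue measure zero, and (ii) the canonical measure distributing unit mass uniformly across the arcs at each stage assigns mass comparable to $c\,\ell_n\,\psi(1/\ell_n)$ to each $I_{n,k}$. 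The weak-$*$ limit of these stagewise measures is then a singular measure $\nu$ supported on $K\subset E$ with the required modulus of continuity.

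The main obstacle lies in this last construction: one must simultaneously enforce support in $E$, singularity, and a modulus-of-continuity bound keyed to an arbitrary unbounded $\psi$. The slack that $\psi(u)\to\infty$ (equivalently, that the Hausdorff gauge $h(t):=t\psi(1/t)$ satisfies $h(t)/t\to\infty$) is precisely what permits a singular $\nu$ on a set of positive Lebesgue measure. A more abstract alternative is to apply Frostman's lemma to the gauge $h$: the conditions $h(t)/t\to\infty$ and $|E|>0$ together imply $H^h(E)=\infty$, yielding a nonzero Radon measure on $E$ with $\nu(I)\le h(|I|)$, which can then be pushed onto a Cantor-type null subset of $E$ to secure singularity.
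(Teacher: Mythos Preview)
Your route differs from the paper's: the paper does not construct $\nu$ at all but simply invokes the main result of \cite{Ra21} (existence of a non-constant singular inner function with $m_\theta(\rho_n)\ge\eta_n$ along any prescribed increasing sequence $\rho_n\uparrow 1$), together with the remark that the construction in \cite{Ra21} can be carried out with $\nu$ supported in any closed $E$ of positive measure. Your argument instead reconstructs this from scratch via a modulus-of-continuity bound on $\nu$ and the standard dyadic Poisson estimate. That is more self-contained, and is likely close in spirit to what \cite{Ra21} actually does.

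There is, however, one genuine slip and two soft spots. The reduction ``replace $\eta_n$ by $\min_{k\le n}\eta_k$'' goes the wrong way: that produces a \emph{smaller} sequence, so proving $\delta_n(\theta)\ge\min_{k\le n}\eta_k$ does not yield $\delta_n(\theta)\ge\eta_n$. You want the nonincreasing majorant $\hat\eta_n:=\sup_{k\ge n}\eta_k$, which still lies in $(0,1)$ and tends to~$0$. Separately, your $\psi$ is only defined on $(0,\sup_n u_n]$, and nothing forces $\sup_n u_n=\infty$; when the $r_n$ stay bounded away from~$1$ you must extend $\psi$ by hand to an unbounded nondecreasing function on all of $(0,\infty)$ so that $h(t)=ct\,\psi(1/t)$ genuinely satisfies $h(t)/t\to\infty$ (otherwise the singularity argument has no content). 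Finally, Frostman's lemma applied directly to $E$ yields a measure with $\nu(I)\le h(|I|)$ but gives no reason for $\nu$ to be singular; you must first produce a compact $K\subset E$ with $|K|=0$ and $\Lambda_h(K)>0$ and apply Frostman there---which is exactly the Cantor-type construction you sketched as option~(a). That construction is standard and does work, but it is where the real labour lies, and your outline of it (``branching calibrated so that\dots'') would need to be fleshed out to count as a proof.
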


\begin{proof}
Choose a strictly increasing sequence $(\rho_n)_{n\ge1}\subset(0,1)$
such that $\rho_n>\eta_n^{1/n}$ for each $n$.
By the main result of \cite{Ra21}, there exists a
non-constant singular inner function $\theta$ such that
$m_\theta(\rho_n)\ge\eta_n$ for all $n$.
Hence $m_\theta(\eta_n^{1/n})\ge \eta_n$ for all~$n$.
From \eqref{E:deltanest}, it follows that $\delta_n(\theta)\ge \eta_n$
for all $n$.

Although it is not explicitly mentioned in \cite{Ra21},
it follows easily from the proof therein that $\theta$ may be chosen
so that its measure $\nu$ is supported on a given
closed subset $E$ of $\TT$, provided that $|E|>0$.
\end{proof}

Theorem~\ref{T:slowdecay} is no longer true if $|E|=0$. 
Indeed, in this case,
$\delta_n(\theta)$ must respect a certain rate of decay.
This is made precise by the following result.

\begin{theorem}\label{T:key}
Let $E$ be a closed subset of $\TT$ such that $|E|=0$.
Then there exists a positive sequence $\epsilon_n\to0$
such that, if $\nu$ is any (non-zero) finite positive measure 
on $E$, and $\theta$ is the singular inner function defined by
\eqref{E:sing}, then
\[
\liminf_{n\to\infty}\frac{\delta_n(\theta)}{\epsilon_n}=0.
\]
\end{theorem}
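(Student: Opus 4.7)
The plan is a dyadic covering argument exploiting $|E|=0$. For each $k\ge 1$, partition $\TT$ into the $2^k$ equal arcs of length $2^{-k}$, and let $\mathcal{A}_k$ denote those meeting $E$, with $N_k:=\#\mathcal{A}_k$. Since $E$ is closed, $\bigcap_k\bigcup\mathcal{A}_k=E$, so by monotone convergence $N_k\,2^{-k}=\bigl|\bigcup\mathcal{A}_k\bigr|\downarrow|E|=0$; hence $h(k):=2^k/N_k\to\infty$. Moreover, each scale-$k$ arc meeting $E$ has at most two, and at least one, descendants at scale $k+1$ meeting $E$, giving $N_k\le N_{k+1}\le 2N_k$, so $h$ is non-decreasing with $h(k+1)\le 2h(k)$. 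All data below depend only on $E$.

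Fix a nonzero finite positive $\nu$ on $E$ and set $M:=\nu(\TT)>0$. Pigeonhole among the $N_k$ arcs of $\mathcal{A}_k$ yields some $I_k$ with $\nu(I_k)\ge M/N_k$. Let $\alpha_k$ be the midpoint of $I_k$, and $z_k:=(1-2^{-k})e^{i\alpha_k}$. The elementary lower bound $P_r(\theta)\ge (4/5)/\ell$ for $r=1-\ell$ and $|\theta|\le\ell/2$ gives
\[
-\log|\theta(z_k)| \;\ge\; \frac{4}{5}\cdot\frac{\nu(I_k)}{2^{-k}} \;\ge\; c_0\,M\,h(k),\qquad c_0:=4/5.
\]
Choosing $n_k:=\lceil c_0 M h(k)\,2^k\rceil$, we have $|z_k|^{n_k}\le e^{-n_k\,2^{-k}}\le e^{-c_0Mh(k)}$, and hence
\[
\delta_{n_k}(\theta) \;\le\; \max\bigl\{|z_k|^{n_k},\,|\theta(z_k)|\bigr\} \;\le\; e^{-c_0Mh(k)}.
\]
The trivial bounds $1\le h(k)\le 2^k$ then force $k\le\log_2 n_k\le 2k+C(M)$ for some constant $C(M)$.

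To construct $\epsilon_n$ depending only on $E$, pick $\psi:\mathbb N\to(0,\infty)$ with $\psi\to\infty$ but $\psi(j)=o\bigl(h(\lceil j/2\rceil)\bigr)$; the explicit choice $\psi(j):=\sqrt{h(\lceil j/2\rceil)}$ works. Set $\epsilon_n:=\exp\bigl(-\psi(\lceil\log_2 n\rceil)\bigr)$, so $\epsilon_n\to 0$. Then $\log_2 n_k\le 2k+C(M)$, together with $h(k+1)\le 2h(k)$, gives $\psi(\lceil\log_2 n_k\rceil)\le C'(M)\sqrt{h(k)}=o(h(k))$, whence
\[
\frac{\delta_{n_k}(\theta)}{\epsilon_{n_k}} \;\le\; \exp\bigl(\psi(\lceil\log_2 n_k\rceil)-c_0Mh(k)\bigr) \;\longrightarrow\; 0,
\]
proving $\liminf_n\delta_n(\theta)/\epsilon_n=0$.

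The main obstacle is uniformity: a single sequence $\epsilon_n$ must handle every nonzero $\nu$ on $E$, whereas both the subsequence $(n_k)$ and the concentration bound $e^{-c_0Mh(k)}$ depend on the total mass $M$. This is resolved by letting $\epsilon_n$ depend only on the dyadic covering data $(N_k)$, and choosing $\psi$ to grow strictly more slowly than $h$, so that the mass-dependent index shift and the factor $M$ in the exponent are both absorbed into the growing margin $c_0Mh(k)-\psi(\lceil\log_2 n_k\rceil)$.
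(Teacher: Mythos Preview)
Your argument is correct, though the Poisson constant $c_0=4/5$ is miscomputed: an arc of normalized length $2^{-k}$ has angular half-width $\pi\cdot 2^{-k}$, not $2^{-k}/2$, so the correct constant is $1/(\pi+1)^2$ (as in the paper's Lemma~\ref{L:innerest}). This is harmless, since only the existence of an absolute $c_0>0$ matters. The only other cosmetic issue is that the claimed lower bound $k\le\log_2 n_k$ requires $c_0Mh(k)\ge 1$ and so holds only for $k$ large (depending on $M$); but you never actually use that lower bound, and you only need $n_k\to\infty$, which follows from $h(k)\ge 1$ and $2^k\to\infty$.

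Your approach is genuinely different from the paper's. The paper goes through Hausdorff measures: it invokes Besicovitch's observation (Lemma~\ref{L:Besicovitch}) to produce a measure function $h$ with $\Lambda_h(E)=0$ and $h(t)/t\to\infty$, and then uses the comparison Lemma~\ref{L:Hausmeasdefn} to locate arcs with $\nu(I)>h(|I|)$. You replace all of this by elementary dyadic box-counting: the quantity $h(k)=2^k/N_k$ plays the role of the ratio $h(t)/t$ in the paper, and pigeonhole among the $N_k$ arcs replaces the Hausdorff-measure comparison, yielding an arc with $\nu(I_k)/|I_k|\ge Mh(k)$. The doubling property $h(k+1)\le 2h(k)$, which you extract from $N_k\le N_{k+1}\le 2N_k$, is what lets you absorb the $M$-dependent shift in $\log_2 n_k$; the paper handles the analogous uniformity differently, by defining $\epsilon_n$ through the implicit equation $h(t_n)/(-t_n\log(1-t_n))=n(\pi+1)^2$ and showing $\delta_n<\epsilon_n^2$ for infinitely many~$n$. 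Your route is shorter and entirely self-contained, avoiding the somewhat technical construction in Lemma~\ref{L:Besicovitch}; the paper's route is perhaps more conceptual, in that it isolates the role of a gauge function dominating Lebesgue measure near~$E$.
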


To prove this result, we need to develop some ideas from
the theory of Hausdorff measures. 

A \emph{measure function} is a continuous increasing function 
$h:(0,1]\to(0,\infty)$ such that $h(t)\to0$ as $t\to0^+$.
Given a measure function $h$, a subset $A\subset\TT$ and $\eta>0$,
we define
\[
\Lambda_h^\eta(A):=\inf\Bigl\{\sum_{I\in\cI}h(|I|): 
\cI \text{~is a countable cover of $A$ by arcs $I$ with $|I|<\eta$}\Bigr\},
\]
and then
\[
\Lambda_h(A):=\lim_{\eta\to0^+}\Lambda_h^\eta(A).
\]
The set function $\Lambda_h$ is an outer measure on $\TT$,
called the \emph{Hausdorff measure} corresponding to the 
measure function $h$. 
If $h(t)=t$ for all $t$, then $\Lambda_h$ is simply the Lebesgue (outer) measure on $\TT$.

We require two general results about Hausdorff measures.
The first of these is a simple consequence of the definitions.

\begin{lemma}\label{L:Hausmeasdefn}
Let $h$ be a measure function and let $\mu$ be a positive
Borel measure on $\TT$. Suppose that there exists $\eta>0$
such that $\mu(I)\le h(|I|)$ for all arcs $I$ with $|I|<\eta$.
Then $\mu(B)\le \Lambda_h(B)$ for all Borel subsets $B\subset\TT$.
\end{lemma}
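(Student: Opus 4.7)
The plan is straightforward: combine countable subadditivity of $\mu$ with the pointwise domination $\mu(I)\le h(|I|)$, then pass to the infimum in the definition of $\Lambda_h$. No real obstacle is anticipated; this is essentially an unwinding of definitions.

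First, fix a Borel set $B\subset\TT$ and choose any $\eta'\in(0,\eta)$. Let $\cI$ be any countable cover of $B$ by arcs $I$ with $|I|<\eta'$. Since $\mu$ is a Borel measure and each $I\in\cI$ is Borel measurable, countable subadditivity gives
\[
\mu(B)\le \sum_{I\in\cI}\mu(I).
\]
Each arc in $\cI$ satisfies $|I|<\eta'<\eta$, so the hypothesis applies and yields $\mu(I)\le h(|I|)$ for every $I\in\cI$. Hence
\[
\mu(B)\le \sum_{I\in\cI}h(|I|).
\]

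Taking the infimum over all such covers $\cI$ gives $\mu(B)\le \Lambda_h^{\eta'}(B)$. Note that $\Lambda_h^{\eta'}(B)$ is monotone increasing as $\eta'$ decreases (fewer admissible covers), so letting $\eta'\to 0^+$ produces $\mu(B)\le \lim_{\eta'\to 0^+}\Lambda_h^{\eta'}(B)=\Lambda_h(B)$, as required. The only minor point to be careful about is restricting to $\eta'<\eta$ so that the cover arcs always fall within the range where the hypothesis applies; taking the limit as $\eta'\to 0^+$ then recovers the full Hausdorff measure.
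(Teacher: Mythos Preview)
Your proof is correct and essentially identical to the paper's: bound $\mu(B)$ by $\sum_{I\in\cI}h(|I|)$ via subadditivity and the hypothesis, then take the infimum over covers. The only cosmetic difference is that the paper works directly at the threshold $\eta$ and uses the monotonicity inequality $\Lambda_h^\eta(B)\le\Lambda_h(B)$ once, rather than introducing an auxiliary $\eta'<\eta$ and passing to the limit.
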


\begin{proof}
Let $B$ be a Borel subset of $\TT$.
Let $\cI$ be a countable covering of $B$ by arcs $I$ such that $|I|<\eta$.
Then we have
\[
\mu(B)\le\mu(\cup_{I\in\cI}I)\le\sum_{I\in\cI}\mu(I)\le\sum_{I\in\cI}h(|I|).
\]
Taking the infimum over all such coverings $\cI$, we obtain
$\mu(B)\le \Lambda_h^\eta(B)\le\Lambda(B)$.
\end{proof}

The second result is a special case of an observation of Besicovitch \cite[Remark~2]{Be56}.

\begin{lemma}\label{L:Besicovitch}
Let $E$ be a closed subset of $\TT$ such that $|E|=0$.
Then there exists a measure function $h$ such that  
$\Lambda_h(E)=0$ and $\lim_{t\to0^+}h(t)/t=\infty$.
\end{lemma}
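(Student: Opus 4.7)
The plan is to build $h$ explicitly by quantifying how efficiently $E$ can be covered by arcs of a given length. For $\epsilon>0$, let $N(\epsilon)$ denote the minimum number of arcs of length $\epsilon$ needed to cover $E$. The key preliminary step is to show $\epsilon N(\epsilon)\to 0$ as $\epsilon\to 0^+$. Given $\delta>0$, outer regularity of Lebesgue measure together with the compactness of $E$ yields a finite union of open arcs $V\supset E$ with $|V|<\delta/2$ and, say, $N_0$ components. For any $\epsilon>0$ each component of $V$ of length $|J|$ can be covered by at most $\lceil|J|/\epsilon\rceil$ arcs of length $\epsilon$, so $E$ admits a cover by at most $|V|/\epsilon+N_0$ such arcs, yielding $\epsilon N(\epsilon)\le|V|+N_0\epsilon<\delta$ once $\epsilon<\delta/(2N_0)$.

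Next I would choose a rapidly decreasing sequence $a_n\downarrow 0$ with $a_n N(a_n)\le 4^{-n}$ and $a_n/a_{n+1}\ge 4$ (the second condition is easy to arrange simultaneously since the first only constrains $a_n$ from below in some sparse sense). Set $h(a_n):=n a_n$, extend $h$ linearly on each interval $[a_{n+1},a_n]$, and continue linearly out to $t=1$. Because $a_n/a_{n+1}\ge 4$, the sequence $n a_n$ is strictly decreasing, so $h$ is continuous, increasing, positive, with $h(t)\to 0$ as $t\to 0^+$, hence a measure function.

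For the verification: on $[a_{n+1},a_n]$ the affine function $h$ has the form $h(t)=c_n t+d_n$ with $d_n=a_n a_{n+1}/(a_n-a_{n+1})>0$, so $h(t)/t=c_n+d_n/t$ is a decreasing function of $t$, bounded below by its right-endpoint value $h(a_n)/a_n=n$. This gives $h(t)/t\ge n$ on $[a_{n+1},a_n]$ and hence $h(t)/t\to\infty$ as $t\to 0^+$. For the Hausdorff-measure bound, cover $E$ at level $n$ by $N(a_n)$ arcs of length $a_n$; this gives $\sum_I h(|I|)=N(a_n)\cdot n a_n\le n\cdot 4^{-n}\to 0$, so $\Lambda_h(E)=0$.

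The main obstacle is really the covering estimate $\epsilon N(\epsilon)\to 0$: one has to translate the qualitative hypothesis $|E|=0$ into a rate, and the compactness of $E$ is essential in limiting the number of components $N_0$ so that the $N_0\epsilon$ error term can be absorbed. Everything afterwards is a bookkeeping exercise in calibrating $a_n$ and $h(a_n)$ against the $4^{-n}$ budget.
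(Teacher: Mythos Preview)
Your argument is correct and closely parallels the paper's own proof: both construct a geometrically decreasing sequence of scales at which $E$ admits finite covers of rapidly shrinking total length, and both define $h$ piecewise-linearly between consecutive scales so that $h(t)/t$ is forced upward while the covers witness $\Lambda_h(E)=0$. The chief organizational difference is that you first isolate the covering-number statement $\epsilon N(\epsilon)\to 0$ and then choose scales $a_n$ with $a_nN(a_n)\le 4^{-n}$, using arcs of a single length $a_n$ at each stage and setting $h(a_n)=na_n$ with affine interpolation; the paper instead recursively picks finite covers $\cI_n$ by arcs of varying lengths in $[t_n,t_{n-1})$ with $\sum_{I\in\cI_n}|I|<4^{-n}$, and defines $h(t)=\min\{2^nt,\,2^{n-1}t_{n-1}\}$ on $(t_n,t_{n-1}]$. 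Your packaging via $N(\epsilon)$ is slightly cleaner conceptually, while the paper's version avoids introducing the auxiliary covering number---but the underlying mechanism is identical.
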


\begin{proof}
We construct a sequence $(t_n)_{n\ge0}$ recursively as follows.
Set $t_0:=1$. Suppose that $n\ge1$ and that $t_{n-1}$ has already been chosen. 
Since $E$ is a compact set of Lebesgue measure zero,
we can find a \emph{finite} cover $\cI_n$ of $E$ by arcs $I$ with $|I|<t_{n-1}$ such that
\[
\sum_{I\in\cI_n}|I|<4^{-n}.
\]
Set
\[
t_n:=\min\Bigl\{\min_{I\in\cI_n}|I|,\,t_{n-1}/4\Bigr\}.
\]
Clearly, the sequence $(t_n)$ obtained in  this way is strictly decreasing and tends to  $0$.

Define $h:(0,1]\to(0,\infty)$ by 
\[
h(t):=
\min\{2^nt,\, 2^{n-1}t_{n-1}\} \quad( t\in(t_n,t_{n-1}],\, n\ge1).
\]
Clearly $h$ is continuous and increasing on interval $(t_n,t_{n-1}]$.
Also, since $t_{n-1}\ge 4t_n$, we have
\[
\lim_{t\to t_n^+}h(t)=\min\{2^nt_n,2^{n-1}t_{n-1}\}=2^nt_n=h(t_n),
\]
so in fact $h$ is continuous and increasing on the whole of $(0,1]$.
Further, we have
\[
h(t_n)=2^nt_n\le 2^n4^{-n}t_0=2^{-n}\to0 \quad(n\to\infty),
\]
whence $\lim_{t\to0^+}h(t)=0$. Thus $h$ is a measure function.

For each $n\ge1$, every arc $I\in\cI_n$ satisfies $|I|\in[t_n,t_{n-1}]$,
so $h(|I|)\le 2^n|I|$, and hence
\[
\sum_{I\in\cI_n}h(|I|)\le 2^n\sum_{I\in\cI_n}|I|\le 2^n4^{-n}=2^{-n}.
\]
It follows that $\Lambda_h^{t_{n-1}}(E)\le 2^{-n}$ for all $n$,
whence $\Lambda_h(E)=0$.

Finally, if $t\in[t_n,t_{n-1}]$, then
\[
\frac{h(t)}{t}=\min\Bigl\{\frac{2^nt}{t},\,\frac{2^{n-1}t_{n-1}}{t}\Bigr\}
\ge\min\{2^n,2^{n-1}\}=2^{n-1}.
\]
Hence $h(t)/t\to\infty$ as $t\to0^+$.
\end{proof}

We also need an estimate for singular inner functions.
As before, we consider $\theta$ of the form
\[
\theta(z)=\exp\Bigl(-\int_\TT\frac{e^{it}+z}{e^{it}-z}\,d\nu(e^{it})\Bigr)
\quad(z\in\DD),
\]
where $\nu$ is a finite positive singular measure on $\TT$,
and we set
\[
m_\theta(r):=\inf_{|z|=r}|\theta(z)| \quad(0\le r<1).
\]

\begin{lemma}\label{L:innerest}
With the above notation, we have
\[
-\log m_\theta(1-\eta)\ge \frac{1}{(\pi+1)^2}\sup_{|I|=\eta}\frac{\nu(I)}{|I|}
\quad(0<\eta\le1),
\]
where the supremum is taken over all arcs $I$ with $|I|=\eta$.
\end{lemma}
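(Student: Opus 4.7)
The plan is to exploit the Poisson-integral representation of $-\log|\theta|$ and concentrate the estimate on the arc $I$ by choosing a convenient test point $z$ on the circle $|z|=1-\eta$.

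Starting from the definition of $\theta$, a direct computation of the real part gives
\[
-\log|\theta(z)| = \int_\TT \frac{1-|z|^2}{|e^{it}-z|^2}\,d\nu(e^{it}) \qquad(z\in\DD).
\]
Fix an arc $I\subset\TT$ with $|I|=\eta$, and let $e^{i\phi}$ be its center. I would take as test point
\[
z := (1-\eta)e^{i\phi},
\]
which lies on the circle $|z|=1-\eta$, so that $m_\theta(1-\eta)\le|\theta(z)|$. It then suffices to show that $-\log|\theta(z)|\ge \nu(I)/((\pi+1)^2|I|)$, since taking the supremum over $I$ afterwards yields the stated inequality.

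The core computation is a pointwise lower bound for the Poisson kernel $P(z,e^{it})$ when $e^{it}\in I$. For the numerator, $1-|z|^2=(1-r)(1+r)\ge 1-r=\eta$. For the denominator, by the triangle inequality
\[
|e^{it}-z|\le |e^{it}-e^{i\phi}|+|e^{i\phi}-z|=2|\sin((t-\phi)/2)|+(1-r).
\]
Since $e^{i\phi}$ is the center of an arc of normalized length $\eta$ (angular length $2\pi\eta$), any $e^{it}\in I$ satisfies $|t-\phi|\le \pi\eta$ in angular distance, so $|e^{it}-e^{i\phi}|\le \pi\eta$, and therefore $|e^{it}-z|\le(\pi+1)\eta$. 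Combining these estimates,
\[
\frac{1-|z|^2}{|e^{it}-z|^2}\ge \frac{\eta}{(\pi+1)^2\eta^2}=\frac{1}{(\pi+1)^2\,\eta}\qquad(e^{it}\in I).
\]

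Integrating this lower bound over $I$ against $d\nu$ (and discarding the nonnegative contribution from $\TT\setminus I$) gives
\[
-\log|\theta(z)|\ge \int_I\frac{d\nu(e^{it})}{(\pi+1)^2\eta}=\frac{\nu(I)}{(\pi+1)^2|I|},
\]
and since $-\log m_\theta(1-\eta)\ge-\log|\theta(z)|$, taking the supremum over arcs $I$ with $|I|=\eta$ completes the proof. There is no serious obstacle here: the only point requiring a moment's care is the bookkeeping between normalized Lebesgue measure on $\TT$ and angular arc-length, which is what produces the factor $\pi$ (and hence the constant $(\pi+1)^2$) via the triangle-inequality estimate above.
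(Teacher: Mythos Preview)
Your proof is correct and follows essentially the same route as the paper's: both use the Poisson representation $-\log|\theta(z)|=\int_\TT P(z,e^{it})\,d\nu$, restrict the integral to the arc $I$ of normalized length $\eta$, bound $|e^{it}-z|$ by the triangle inequality (giving $\pi\eta+(1-r)=(\pi+1)\eta$ after setting $r=1-\eta$), and drop $1-r^2$ to $1-r=\eta$ in the numerator. The only cosmetic difference is that you fix the arc first and choose the test point at its center, whereas the paper fixes the test point $re^{is}$ first and considers the arc $\{|t-s|\le\pi\eta\}$; the resulting computations are identical.
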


\begin{proof}
Let $\eta\in(0,1], r\in[0,1)$ and $e^{is}\in\TT$. A simple calculation gives
\begin{align*}
-\log|\theta(re^{is})|
&=\int_{\TT}\frac{1-r^2}{|e^{it}-re^{is}|^2}\,d\nu(e^{it})\\
&\ge\int_{|t-s|\le\pi\eta}\frac{1-r^2}{|e^{it}-re^{is}|^2}\,d\nu(e^{it})\\
&=\int_{|t-s|\le\pi\eta}\frac{(1+r)(1-r)}{|e^{i(t-s)}-1+1-r|^2}\,d\nu(e^{it})\\
&\ge\frac{1-r}{|\pi\eta+1-r|^2}\nu([s-\pi\eta,s+\pi\eta]).
\end{align*}
Taking the supremum of both sides over all $e^{is}\in\TT$, we 
deduce that
\[
-\log m_\theta(r)\ge \frac{1-r}{|\pi\eta+1-r|^2}\sup_{|I|=\eta}\nu(I).\]
Finally, setting $r:=1-\eta$, we obtain
\[
-\log m_\theta(1-\eta)\ge \frac{\eta}{|\pi\eta+\eta|^2}\sup_{|I|=\eta}\nu(I)
=\frac{1}{(\pi+1)^2}\sup_{|I|=\eta}\frac{\nu(I)}{|I|}.\qedhere
\]
\end{proof}

\begin{proof}[Proof of Theorem~\ref{T:key}]
By Lemma~\ref{L:Besicovitch}, there exists a measure
function $h$ such that $\Lambda_h(E)=0$ and 
$h(t)/t\to\infty$ as $t\to0^+$. 
For $n\ge1$ set
\[
t_n:=\inf\Bigl\{t\in(0,1):\frac{h(t)}{-t\log(1-t)}\le n(\pi+1)^2\Bigr\}.
\]
Note that $h(t)/(-t\log(1-t))\to\infty$ as $t\to0^+$,
so the sequence $(t_n)$ is strictly decreasing and 
tends to zero. We shall prove that the sequence
$\epsilon_n:=(1-t_{n+1})^{n/2}$ satisfies the conclusion of the theorem.

First of all, we show that $\epsilon_n\to0$. To see this, note that,
by continuity, we have
\[
\frac{h(t_n)}{-t_n\log(1-t_n)}=n(\pi+1)^2 \quad(n\ge1).
\]
Hence
\[
\log\epsilon_{n-1}=\frac{(n-1)}{2}\log(1-t_{n})
=-\frac{1}{(\pi+1)^2}\frac{n-1}{2n}\frac{h(t_n)}{t_n}\to-\infty \quad (n\to\infty),
\]
which implies that $\epsilon_n\to0$.

Now let $\nu$ be a positive finite measure on $E$
and let $\theta$ be the inner function defined by \eqref{E:sing}.
Since $\nu(E)>0$ and $\Lambda_h(E)=0$,
Lemma~\ref{L:Hausmeasdefn} implies that there exist arcs $I_k$
with $|I_k|\to0$ such that $\nu(I_k)>h(|I_k|)$ for all $k$.
Set $\eta_k:=|I_k|$. 
Suppose that $n$ and $k$ are such that
$t_{n+1}\le \eta_k< t_n$. Then,
using Lemma~\ref{L:innerest}, we have
\begin{align*}
-\log m_\theta(1-\eta_k)
&\ge \frac{1}{(\pi+1)^2}\frac{\nu(I_k)}{|I_k|}\\
&\ge \frac{1}{(\pi+1)^2}\frac{h(\eta_k)}{\eta_k}\\
&> -n\log(1-\eta_k),
\end{align*}
the last inequality because $\eta_k< t_n$.
Thus $m_\theta(1-\eta_k)<(1-\eta_k)^n$,
which, together with \eqref{E:deltanest}, implies that 
\[
\delta_n<(1-\eta_k)^n\le (1-t_{n+1})^n=\epsilon_n^2.
\]
To summarize, we have shown that $\delta_n<\epsilon_n^2$
for each $n$ for which there exists an $\eta_k\in[t_{n+1},t_n)$.
Since $\eta_k\to0$ as $k\to\infty$, there are infinitely many such~$n$.
It follows that $\liminf_{n\to\infty}\delta_n/\epsilon_n\le \liminf_{n\to\infty}\epsilon_n=0$.
\end{proof}


\section{Return to Theorems~\ref{T:main} and \ref{T:converse}}\label{S:return}

We now return to the Theorems~\ref{T:main} and \ref{T:converse} stated in the introduction,
and see what can be said about them in the light of the results developed above.
First of all, we give a  proof of Theorem~\ref{T:converse}.

\begin{proof}[Proof of Theorem~\ref{T:converse}]
Let $E$ be a closed subset of $\TT$ such that $|E|>0$,
and let $(u_n)$ be a positive sequence such that $u_n\to\infty$.
We take $T:=S_\theta$,
where $\theta$ is a  singular inner function.
By Theorems~\ref{T:defect} and \ref{T:negpowers}, 
such an operator automatically satisfies 
$\rank(I-S_\theta^*S_\theta)=1$ and $\sup_n\|S_\theta^{-n}\|=\infty$.
It remains to be shown that $\theta$ can be chosen in such a way that, 
in addition,
$\sigma(S_\theta)\subset E$ and $\|S_\theta^{-n}\|=O(u_n)$.

Note that $x\mapsto x^{-2}\log(2/x)$ is decreasing and bounded by $2/x^3$ on $(0,1)$. Thus, since $u_n\to\infty$, 
there exists a positive sequence $\eta_n\to0$ such that
\[
A\frac{\log(2/\eta_n)}{\eta_n^2}\le u_n \quad(n\ge1),
\]
where $A$ is the constant in \eqref{E:estimates}.
By Theorem~\ref{T:slowdecay}, 
there exists a finite positive singular measure $\nu$ on $E$ such that, 
if $\theta$ is the singular inner function defined by \eqref{E:sing}, 
then $\delta_n(\theta)\ge\eta_n$ for all $n$.  
By Theorem~\ref{T:LM} we then have 
$\sigma(S_\theta)=\supp\nu\subset E$,
and by Corollary~\ref{C:estimates}
\[
\|S_\theta^{-n}\|\le A\frac{\log(2/\eta_n)}{\eta_n^2}\le u_n \quad(n\ge1).
\]
Thus $S_\theta$ has all the required properties, and the proof is complete.
\end{proof}

We can also prove a restricted version of Theorem~\ref{T:main},
applied just to compressed shifts $S_\theta$,
which we formulate as follows.

\begin{theorem}
Let $E$ be a closed subset of $\TT$ such that $|E|=0$.
Then there exists a positive sequence $u_n\to\infty$
such that, if $S_\theta$ is any compressed shift operator with $\sigma(S_\theta)\subset E$, then
\[
\limsup_{n\to\infty}\frac{\|S_\theta^{-n}\|}{u_n}=\infty.
\]
\end{theorem}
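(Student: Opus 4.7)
My plan is to combine the spectral description of a compressed shift with the singular inner function decay estimate of Theorem~\ref{T:key}, feeding both into the lower half of Corollary~\ref{C:estimates}.

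First I would unpack the hypothesis $\sigma(S_\theta)\subset E\subset\TT$. By the Liv\v{s}ic--Moeller theorem (Theorem~\ref{T:LM}) this forces $Z_\theta=\emptyset$, so $\theta$ is a (nonconstant) singular inner function of the form \eqref{E:sing} for some finite positive singular measure $\nu$, and moreover $Y_\theta=\supp\nu\subset E$. In particular $\nu$ is a nonzero finite positive measure carried by $E$, placing $\theta$ exactly within the scope of Theorem~\ref{T:key}.

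Next I would invoke Theorem~\ref{T:key} to obtain a positive sequence $\epsilon_n\to0$ depending only on $E$ (and not on $\theta$) such that
\[
\liminf_{n\to\infty}\frac{\delta_n(\theta)}{\epsilon_n}=0
\]
for every admissible $\theta$. I would then set $u_n:=1/(2\epsilon_n)$, which is positive and tends to $\infty$.

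Finally, fixing such a compressed shift $S_\theta$, the lower bound in Corollary~\ref{C:estimates} gives $\|S_\theta^{-n}\|\ge\frac{1}{2}(1/\delta_n(\theta)-1)$, and along a subsequence $(n_k)$ for which $\delta_{n_k}(\theta)/\epsilon_{n_k}\to 0$ I would estimate
\[
\frac{\|S_\theta^{-n_k}\|}{u_{n_k}}\ge\frac{\epsilon_{n_k}}{\delta_{n_k}(\theta)}-\epsilon_{n_k}\longrightarrow\infty,
\]
which yields $\limsup_{n\to\infty}\|S_\theta^{-n}\|/u_n=\infty$, as required. There is no real obstacle left at this stage: the analytical heavy lifting has already been done in Theorem~\ref{T:key} (whose proof invokes the Besicovitch-type Hausdorff measure construction of Lemma~\ref{L:Besicovitch} together with the Poisson-type estimate of Lemma~\ref{L:innerest}) and in Corollary~\ref{C:estimates}. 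The present theorem is essentially a bookkeeping exercise that repackages these inputs in the form that will be needed for the compressed-shift case of Theorem~\ref{T:main}.
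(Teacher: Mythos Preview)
Your proposal is correct and follows essentially the same route as the paper: invoke Theorem~\ref{T:LM} to reduce to a singular inner function supported on $E$, apply Theorem~\ref{T:key} to get the sequence $(\epsilon_n)$, set $u_n$ to be (a constant multiple of) $1/\epsilon_n$, and feed the lower bound of Corollary~\ref{C:estimates} into the $\liminf$ to obtain the $\limsup$ conclusion. The only cosmetic difference is that the paper takes $u_n:=1/\epsilon_n$ rather than $1/(2\epsilon_n)$, which is immaterial.
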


\begin{proof}
Let $(\epsilon_n)$ be a sequence satisfying the conclusion
of Theorem~\ref{T:key}. Set $u_n:=1/\epsilon_n$.
Then $(u_n)$
is a positive  sequence such that $u_n\to\infty$.
We shall show that this sequence satisfies the conclusion of the theorem.

Let $S_\theta$ be a compressed shift such that $\sigma(S_\theta)\subset E$. 
By Theorem~\ref{T:LM}, the inner function $\theta$ is singular 
and has the form \eqref{E:sing} for some positive finite measure $\nu$ on $E$. 
By Corollary~\ref{C:estimates}, we have
\[
\|S_\theta^{-n}\|\ge \frac{1}{2}\Bigl(\frac{1}{\delta_n(\theta)}-1\Bigr) \quad(n\ge1).
\]
Also, from Theorem~\ref{T:key}, we have
\[
\liminf_{n\to\infty}\frac{\delta_n(\theta)}{\epsilon_n}=0.
\]
Combining these inequalities, we obtain that
\[
\limsup_{n\to\infty}\frac{\|S_\theta^{-n}\|}{u_n}
\ge \limsup_{n\to\infty}\frac{\epsilon_n}{2}\Bigl(\frac{1}{\delta_n(\theta)}-1\Bigr)
=\frac{1}{2}\limsup_{n\to\infty}\frac{\epsilon_n}{\delta_n(\theta)}=\infty.\qedhere
\]
\end{proof}

Our goal in the rest of the paper is to prove Theorem~\ref{T:main}
for general Hilbert-space contractions.
We shall do this by exploiting the Sz.-Nagy--Foias functional-model theory for contractions.
These functional models are essentially compressed shifts, where now the inner function $\theta$
is allowed to be operator-valued. In the next section we set up the basic background, 
and in the section that follows we deduce Theorem~\ref{T:main}


\section{Functional models}\label{S:functmodels}

Our treatment follows closely that of  \cite[Chapter~V]{Be88} and \cite[Chapter~VI]{SFBL10}.

\subsection{Compressed shifts for operator-valued inner functions}

In this section, $\cF$ and $\cF'$ denote fixed separable Hilbert spaces.
We write $\cL(\cF,\cF')$ for the Banach space of bounded linear operators from $\cF$ into $\cF'$.
Also we set $H^2(\cF):=H^2\otimes\cF$
(with the Hilbert-space tensor product). The shift $S_\cF:H^2(\cF)\to H^2(\cF)$ is defined by
$S_\cF:=S\otimes I_\cF$, where $S$ denotes the usual shift on~$H^2$. Explicitly, we have
\[
(S_\cF f)(\lambda)=\lambda f(\lambda) \quad(f\in H^2(\cF),\,\lambda\in\DD).
\]
 
A function $\Theta\in H^\infty(\cL(\cF,\cF'))$ is said to be \emph{inner} 
if $\Theta(\zeta)$ is an isometry for a.e.\ $\zeta\in\TT$. 

Let $\Theta\in H^\infty(\cL(\cF,\cF'))$ be an inner function. 
The \emph{compressed shift} associated to~$\Theta$ is the operator 
$S_\Theta$ acting on $\cK_\Theta:=H^2(\cF')\ominus \Theta H^2(\cF)$ defined by
\[
S_\Theta:=P_{\cK_\Theta}S_{\cF'}|_{\cK_\Theta}.
\]
Clearly $S_\Theta$ is a contraction operator on $\cK_\Theta$.

\begin{theorem}
Let $\Theta\in H^\infty(\cL(\cF,\cF'))$ be an inner function. 
Then $S_\Theta^{*n}\to0$  strongly as $n\to\infty$.
\end{theorem}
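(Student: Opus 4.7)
The plan is to mimic the scalar argument used for Theorem~\ref{T:adjoint}, namely to identify $S_\Theta^*$ with the restriction of $S_{\cF'}^*$ to $\cK_\Theta$, and then check strong convergence by directly inspecting Taylor coefficients.

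First I would verify that $\cK_\Theta$ is invariant under $S_{\cF'}^*$. The key observation is that multiplication by the independent variable commutes with multiplication by $\Theta$ in the sense that $S_{\cF'}\Theta = \Theta S_\cF$ as operators from $H^2(\cF)$ into $H^2(\cF')$; consequently $S_{\cF'}\bigl(\Theta H^2(\cF)\bigr)\subset \Theta H^2(\cF)$, so $\Theta H^2(\cF)$ is a (closed) $S_{\cF'}$-invariant subspace. Taking orthogonal complements yields that $\cK_\Theta$ is $S_{\cF'}^*$-invariant. A brief inner-product calculation, just as in the proof of Theorem~\ref{T:adjoint}, then gives $S_\Theta^* = S_{\cF'}^*|_{\cK_\Theta}$, and hence $S_\Theta^{*n}f = S_{\cF'}^{*n}f$ for every $f\in\cK_\Theta$ and every $n\ge1$.

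It therefore suffices to prove that $S_{\cF'}^{*n}\to 0$ strongly on the whole space $H^2(\cF')$. Since $S_{\cF'}=S\otimes I_{\cF'}$, we have $S_{\cF'}^{*n}=S^{*n}\otimes I_{\cF'}$, and any $f\in H^2(\cF')$ admits a Taylor expansion $f(\lambda)=\sum_{k\ge0}a_k\lambda^k$ with $a_k\in\cF'$ and $\sum_{k\ge0}\|a_k\|^2 = \|f\|^2<\infty$. A direct calculation shows $(S_{\cF'}^{*n}f)(\lambda)=\sum_{k\ge0}a_{k+n}\lambda^k$, so that $\|S_{\cF'}^{*n}f\|^2 = \sum_{k\ge n}\|a_k\|^2$, which tends to $0$ as $n\to\infty$ as the tail of a convergent series.

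There is no real obstacle here: once the invariance of $\Theta H^2(\cF)$ under $S_{\cF'}$ is noted, everything reduces to the scalar case via the tensor-product structure. The only point that requires any care is checking that $S_\Theta^*$ really equals $S_{\cF'}^*|_{\cK_\Theta}$ (and not merely the compression of $S_{\cF'}^*$ to $\cK_\Theta$), which is why the $S_{\cF'}^*$-invariance of $\cK_\Theta$ is essential.
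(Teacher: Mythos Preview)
Your proof is correct and follows exactly the paper's approach: the paper's proof is a two-line argument stating that $S_\Theta^*=S_{\cF'}^*|_{\cK_\Theta}$ (just as in the scalar case) and then that $S_{\cF'}^{*n}f\to0$ for every $f$. You have simply supplied the details behind both of these assertions.
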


\begin{proof}
We remark that, just as in the scalar case, $S_\Theta^*=S_{\cF'}^*|_{\cK_\Theta}$.
Hence, for all $f\in\cK_\Theta$,
\[
S_\Theta^{*n}f=S_{\cF'}^{*n}f\to0 \quad(n\to\infty).\qedhere
\]
\end{proof}

The next result is the analogue, in this context, of the Liv\v sic--Moeller theorem,
Theorem~\ref{T:LM}.

\begin{theorem}\label{T:LM2}
Let $\Theta\in H^\infty(\cL(\cF,\cF'))$ be an inner function. 
Then the spectrum and point spectrum of $S_\Theta$ are given respectively by
\[
\sigma(S_\Theta)=Y_{\Theta}\cup Z_{\Theta}
\quad\text{and}\quad
\sigma_p(S_\Theta)=Z_{\Theta}^0,
\]
where:
\begin{itemize}
\item $Y_{\Theta}$ is the complement in $\TT$ of 
the union of open arcs where $\Theta$ is regular and unitary-valued,
\item $Z_{\Theta}$ is the set of $\lambda\in\DD$ such that $\Theta(\lambda)$ is not invertible,
\item $Z_{\Theta}^0$ is the set of $\lambda\in\DD$ such that $\Theta(\lambda)$ is not injective.
\end{itemize}
\end{theorem}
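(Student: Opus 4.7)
The strategy closely mirrors the proof of the scalar Liv\v sic--Moeller theorem (Theorem~\ref{T:LM}), with the main new ingredients being the operator-valued nature of $\Theta$ and a careful use of the duality between $\Theta(\lambda)$ and $\Theta(\lambda)^*$.

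For the point spectrum at $\lambda\in\DD$, I would argue directly. If $f\in\cK_\Theta$ is non-zero with $S_\Theta f=\lambda f$, then $(z-\lambda)f\in\Theta H^2(\cF)$, so $(z-\lambda)f=\Theta g$ for some $g\in H^2(\cF)$. Evaluating at $z=\lambda$ gives $\Theta(\lambda)g(\lambda)=0$. If $g(\lambda)=0$, dividing out the factor $(z-\lambda)$ would place $f$ in $\Theta H^2(\cF)\cap\cK_\Theta=\{0\}$; hence $g(\lambda)\in\ker\Theta(\lambda)\setminus\{0\}$, showing $\lambda\in Z_\Theta^0$. Conversely, given $0\ne e\in\ker\Theta(\lambda)$, the function $f(z):=\Theta(z)e/(z-\lambda)$ lies in $H^2(\cF')$ (the zero at $\lambda$ cancels the pole), and if $f=f_0+\Theta h_0$ with $f_0\in\cK_\Theta$, a short calculation yields $(S_\Theta-\lambda I)f_0=0$ with $f_0\ne 0$. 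To exclude $\sigma_p(S_\Theta)\cap\TT$, if $S_\Theta f=\zeta f$ with $|\zeta|=1$, contractivity forces $\langle(I-S_\Theta^*S_\Theta)f,f\rangle=0$, whence $S_\Theta^*f=\bar\zeta f$; then $\|S_\Theta^{*n}f\|=\|f\|$ contradicts the preceding theorem that $S_\Theta^{*n}\to 0$ strongly.

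For $\sigma(S_\Theta)\cap\DD$, if $\Theta(\lambda)$ is invertible I would explicitly invert $S_\Theta-\lambda I$: given $g\in\cK_\Theta$, define $f(z):=\bigl(g(z)+\Theta(z)h\bigr)/(z-\lambda)$ with the constant $h:=-\Theta(\lambda)^{-1}g(\lambda)$; the numerator vanishes at $\lambda$, so $f\in H^2(\cF')$, and $(S_\Theta-\lambda I)P_{\cK_\Theta}f=g$, giving surjectivity (injectivity is already in hand). Conversely, if $\Theta(\lambda)$ is non-invertible, it is either non-injective (so $\lambda\in\sigma_p(S_\Theta)$) or non-surjective, in which case $\Theta(\lambda)^*$ is non-injective and the same reasoning applied to $S_\Theta^*=S_{\cF'}^*|_{\cK_\Theta}$, using the dual reproducing kernels $(I-\Theta(z)\Theta(\mu)^*)/(1-\bar\mu z)$, yields $\bar\lambda\in\sigma_p(S_\Theta^*)$.

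For the boundary part, the easy direction is that if $\zeta\in\TT\setminus Y_\Theta$, then $\Theta$ extends holomorphically across an open arc $J\ni\zeta$ on which it is unitary; by openness of invertibility, $\Theta(z)$ remains invertible on a complex neighbourhood of $J$, and the explicit resolvent formula above extends holomorphically there. The hard direction, $\zeta\in Y_\Theta\implies\zeta\in\sigma(S_\Theta)$, is the main obstacle: one must show that a holomorphic extension of $\lambda\mapsto(S_\Theta-\lambda I)^{-1}$ across $\zeta$ can be converted, via the $H^\infty$-functional calculus, into a holomorphic unitary extension of $\Theta$ itself. This transfer from resolvent analyticity to analyticity of $\Theta$ is the delicate point and requires the full Sz.-Nagy--Foias machinery, for which one would invoke \cite[Chapter~V]{Be88} or \cite[Chapter~VI]{SFBL10}.
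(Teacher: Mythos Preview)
The paper's own proof of this theorem consists solely of two citations (\cite[p.265, Theorem~4.1]{SFBL10} and \cite[p.116, Proposition~1.18]{Be88}), so your sketch is considerably more explicit than what the paper offers. Your arguments for the point spectrum and for $\sigma(S_\Theta)\cap\DD$ are correct and are precisely the standard ones found in those references; in particular, the device of producing eigenvectors of $S_\Theta^*$ from vectors in $\ker\Theta(\lambda)^*$ via the kernel $(I-\Theta(z)\Theta(\lambda)^*)/(1-\overline\lambda z)$ is exactly what appears in \cite{Be88}. You also correctly isolate the boundary equality $\sigma(S_\Theta)\cap\TT=Y_\Theta$ as the part that genuinely requires the Sz.-Nagy--Foias machinery.

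One step in your outline is too quick, however. For the direction $\zeta\notin Y_\Theta\Rightarrow\zeta\notin\sigma(S_\Theta)$ you assert that ``the explicit resolvent formula above extends holomorphically there''. But that formula involves the point evaluation $g(\lambda)$ for $g\in\cK_\Theta$, which has no a~priori meaning once $\lambda$ leaves $\overline\DD$. What is missing is the (non-trivial) fact that every function in $\cK_\Theta$ continues analytically across any arc on which $\Theta$ extends unitarily, or else a direct proof that $\|(S_\Theta-\lambda I)^{-1}\|$ remains bounded as $\lambda\to\zeta$ from within~$\DD$. Either route is itself part of the apparatus you cite for the ``hard'' direction, so in practice both implications on $\TT$ are handled simultaneously by the same references---which is exactly what the paper does by citing \cite{SFBL10} and \cite{Be88} without further comment.
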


\begin{proof}
This follows by combining \cite[p.265, Theorem 4.1]{SFBL10} 
and \cite[p.116, Proposition~1.18]{Be88}.
\end{proof}

\begin{corollary}\label{C:LM2}
If $\sigma(S_\Theta)$ is a proper subset of $\TT$,
then $\Theta$ has a holomorphic extension to $\CC_\infty\setminus\sigma(S_\Theta)$.
This extension satisfies 
\[
\Theta(1/\overline{\lambda})^*
=\Theta(\lambda)^{-1} \qquad(\lambda\in\CC_\infty\setminus\sigma(S_\Theta)).
\]
\end{corollary}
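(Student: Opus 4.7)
The plan is to combine the structural information provided by Theorem~\ref{T:LM2} with an operator-valued Schwarz reflection across the arcs of $\TT\setminus\sigma(S_\Theta)$. Write $\Sigma:=\sigma(S_\Theta)$. The hypothesis $\Sigma\subsetneq\TT$ has two immediate consequences via Theorem~\ref{T:LM2}: since $Z_\Theta\subset\DD$ is contained in $\Sigma\subset\TT$, we must have $Z_\Theta=\emptyset$, so $\Theta(\lambda)\in\cL(\cF,\cF')$ is invertible for every $\lambda\in\DD$; and since $Y_\Theta\ne\TT$, every point of $\TT\setminus\Sigma$ lies inside an open arc across which $\Theta$ already admits a holomorphic extension taking unitary values on the arc.

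First I would define an extension to the exterior region $\{|\lambda|>1\}\cup\{\infty\}$ by the reflection formula
\[
\Theta(\lambda):=\bigl(\Theta(1/\overline{\lambda})^*\bigr)^{-1},
\]
with $\Theta(\infty):=(\Theta(0)^*)^{-1}$. This is well-defined because $1/\overline{\lambda}\in\DD$ and $\Theta(1/\overline{\lambda})$ is invertible. The map $\lambda\mapsto\Theta(1/\overline{\lambda})^*$ is holomorphic as a composition of two antiholomorphic operations (the anti-conformal reflection $\lambda\mapsto1/\overline{\lambda}$ and the pointwise adjoint), and inversion of pointwise-invertible, locally bounded, operator-valued holomorphic functions preserves holomorphicity by the standard Neumann-series identity $T(\mu)^{-1}-T(\lambda)^{-1}=T(\mu)^{-1}(T(\lambda)-T(\mu))T(\lambda)^{-1}$. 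Hence the extension is holomorphic on $\{|\lambda|>1\}\cup\{\infty\}$, and the identity $\Theta(1/\overline{\lambda})^*=\Theta(\lambda)^{-1}$ holds there tautologically and, via the substitution $\lambda\leftrightarrow1/\overline{\lambda}$, on $\DD$ as well.

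Next I would verify that this exterior extension agrees across every arc $J\subset\TT\setminus\Sigma$ with the holomorphic extension from $\DD$ provided by Theorem~\ref{T:LM2}. For $\zeta\in J$ the unitarity of $\Theta(\zeta)$ gives $\Theta(\zeta)^*=\Theta(\zeta)^{-1}$, hence $(\Theta(\zeta)^*)^{-1}=\Theta(\zeta)$, so the two candidate values match on $J$. Applying Morera's theorem to each scalar matrix coefficient $\lambda\mapsto\langle\Theta(\lambda)x,y\rangle$ with $x\in\cF$, $y\in\cF'$ then upgrades this pointwise coincidence to weak holomorphicity across $J$, and local boundedness of $\Theta$ near $J$ promotes weak holomorphicity to norm holomorphicity in the standard way. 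Gluing the interior, exterior, and arc neighbourhoods yields a holomorphic extension of $\Theta$ to all of $\CC_\infty\setminus\Sigma$ satisfying the claimed reflection identity.

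The main obstacle, as I see it, is the boundary-matching step: one needs operator-valued holomorphicity across the arcs rather than mere pointwise coincidence of the two branches. The cleanest route is the reduction to scalar matrix coefficients followed by classical Schwarz/Morera, together with the weak-to-strong holomorphicity passage, which is where all the genuine analytic content sits. By comparison, the bookkeeping on the exterior disk and the final identity are essentially automatic once the reflection formula is in place.
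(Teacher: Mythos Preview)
Your proposal is correct and follows essentially the same Schwarz-reflection strategy as the paper: define the exterior branch via $\lambda\mapsto\Theta(1/\overline{\lambda})^*$ (the paper) or its inverse (you), and glue it to the interior $\Theta$ across the arcs of $\TT\setminus\sigma(S_\Theta)$ where Theorem~\ref{T:LM2} guarantees $\Theta$ is already regular and unitary-valued. The only differences are cosmetic---the paper extends $\Theta^{-1}$ and leaves the Morera/weak-to-strong gluing implicit, whereas you extend $\Theta$ directly and spell out those analytic details---so there is nothing further to add.
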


\begin{proof}
The function $\lambda\mapsto\Theta(1/\overline{\lambda})^*$ 
is holomorphic on $\CC_\infty\setminus\overline{\DD}$.
Moreover, by Theorem~\ref{T:LM2}, it continues analytically across
each point of $\TT\setminus\sigma(S_\Theta)$
and coincides with $\Theta(\lambda)^{-1}$ at these points.
It therefore provides an analytic continuation of $\Theta(\lambda)^{-1}$
to the whole of $\CC_\infty\setminus\sigma(S_\Theta)$.
\end{proof}

Let $\Theta\in H^\infty(\cL(\cF,\cF'))$ be an inner function. 
Given $\Phi\in H^\infty(\cL(\cF'))$, 
we write $A_\Phi^\Theta$ for the truncated Toeplitz operator on 
$\cK_\Theta$ defined by
\[
A_{\Phi}^\Theta f:=P_{\cK_\Theta}(\Phi f) \quad(f\in\cK_\Theta).
\]
The next result is a version of the commutant lifting theorem.
In particular, it yields a formula for $\|A_\Phi^\Theta\|$
which is the vector-valued analogue of Theorem~\ref{T:norm}.

\begin{theorem}\label{T:commutant}
Let $\Theta\in H^\infty(\cL(\cF,\cF'))$ be an inner function.
If $X\in\cL(\cK_\Theta)$ and
$XS_\Theta=S_\Theta X$,
then $X=A_\Phi^\Theta$, where $\Phi\in H^\infty(\cL(\cF'))$ 
with $\Phi\Theta H^2(\cF)\subset\Theta H^2(\cF)$
and $\|\Phi\|_{H^\infty(\cL(\cF'))}=\|X\|$.
\end{theorem}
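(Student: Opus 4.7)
The plan is to derive the statement from the Sz.-Nagy--Foias commutant lifting theorem combined with the classical description of the commutant of the shift on $H^2(\cF')$.

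First I would exhibit $S_{\cF'}$ on $H^2(\cF')$ as the minimal isometric dilation of $S_\Theta$. Since $\Theta H^2(\cF)$ is $S_{\cF'}$-invariant, the subspace $\cK_\Theta$ is semi-invariant for $S_{\cF'}$, and Sarason's lemma gives $S_\Theta^n = P_{\cK_\Theta} S_{\cF'}^n|_{\cK_\Theta}$ for all $n\ge 0$. Because $S_\Theta$ is of class $C_{\cdot 0}$ (by the previous theorem, $S_\Theta^{*n}\to 0$ strongly), this isometric dilation is in fact the minimal one; see \cite[Chapter VI]{SFBL10}.

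Given $X\in\cL(\cK_\Theta)$ with $XS_\Theta = S_\Theta X$, the Sz.-Nagy--Foias commutant lifting theorem then produces $\tilde{X}\in\cL(H^2(\cF'))$ with $\tilde{X}S_{\cF'}=S_{\cF'}\tilde{X}$, $\|\tilde{X}\|=\|X\|$, and the lift property $\tilde{X}^*\cK_\Theta\subset\cK_\Theta$, $\tilde{X}^*|_{\cK_\Theta}=X^*$. Equivalently, $\tilde{X}$ leaves $\Theta H^2(\cF)$ invariant and satisfies $P_{\cK_\Theta}\tilde{X}|_{\cK_\Theta}=X$.

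Finally, any operator on $H^2(\cF')$ commuting with $S_{\cF'}$ is an analytic multiplication operator $M_\Phi$ with $\Phi\in H^\infty(\cL(\cF'))$ unique and $\|M_\Phi\|=\|\Phi\|_{H^\infty(\cL(\cF'))}$. Hence $\tilde{X}=M_\Phi$ for some $\Phi$ with $\|\Phi\|_{H^\infty(\cL(\cF'))}=\|X\|$. The invariance of $\Theta H^2(\cF)$ under $M_\Phi$ is precisely the condition $\Phi\Theta H^2(\cF)\subset\Theta H^2(\cF)$, and the compression formula gives $X=P_{\cK_\Theta}M_\Phi|_{\cK_\Theta}=A_\Phi^\Theta$. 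The only real obstacle is the norm equality $\|\tilde{X}\|=\|X\|$, which is the nontrivial content of the commutant lifting theorem; I would quote it from \cite[Chapter VI]{SFBL10} or \cite[Chapter V]{Be88}.
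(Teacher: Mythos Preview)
Your argument is the standard one and matches what the paper invokes: the paper's own proof is simply a citation to \cite[p.118, Proposition~1.24]{Be88}, where the result is established precisely via commutant lifting together with the identification of the commutant of $S_{\cF'}$ with analytic multipliers in $H^\infty(\cL(\cF'))$.

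One small point deserves correction. The assertion that $S_{\cF'}$ on $H^2(\cF')$ is the \emph{minimal} isometric dilation of $S_\Theta$ does not follow from $S_\Theta\in C_{\cdot 0}$ alone; that property only tells you the minimal dilation is a unilateral shift, not that its multiplicity equals $\dim\cF'$. Minimality here is equivalent to the statement that no nonzero $f\in H^2(\cF')$ satisfies $S_{\cF'}^{*n}f\in\Theta H^2(\cF)$ for all $n\ge0$, which in turn holds exactly when $\Theta(0)$ is purely contractive---an assumption the theorem does not make. The fix is painless: the orthogonal complement in $H^2(\cF')$ of the minimal-dilation space is $S_{\cF'}$-reducing and contained in $\Theta H^2(\cF)$, so the lift produced by commutant lifting on the minimal space extends by $0$ to all of $H^2(\cF')$, still commuting with $S_{\cF'}$, still leaving $\Theta H^2(\cF)$ invariant, and with the same norm. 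With this adjustment your proof is complete.
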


\begin{proof}
See \cite[p.118, Proposition 1.24]{Be88}.
\end{proof}

We use this result to obtain a lower bound for $\|S_{\Theta}^{-n}\|$ when $S_\Theta$ is invertible,
which corresponds to the fact that $\Theta(0)$ is invertible.

\begin{theorem}\label{T:opestimates}
Let $\Theta\in H^\infty(\cL(\cF,\cF'))$ be an inner function. Assume that $S_\Theta$ is invertible.
Then
\[
\|S_{\Theta}^{-n}\|\ge \frac{1}{2}\Bigl(\frac{1}{{\delta_n(\Theta)}}-1\Bigr) \quad(n\ge1),
\]
where 
\begin{equation}\label{E:deltan2}
\delta_n(\Theta):=
\inf\Bigl\{\max\{|\lambda|^n,\,\|\Theta(\lambda)^*x\|\}:~ \lambda\in\DD,\ x\in\cF',\ \|x\|=1\Bigr\}.
\end{equation}
\end{theorem}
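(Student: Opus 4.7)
The plan is to follow the strategy used for the lower bound in the scalar case (Theorem~\ref{T:estimates}), replacing the corona-based Theorem~\ref{T:corona} by the commutant lifting theorem (Theorem~\ref{T:commutant}).

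First, since $S_\Theta^{-n}$ commutes with $S_\Theta$, Theorem~\ref{T:commutant} furnishes a $\Phi\in H^\infty(\cL(\cF'))$ with $\Phi\Theta H^2(\cF)\subset\Theta H^2(\cF)$, $A_\Phi^\Theta=S_\Theta^{-n}$, and $\|\Phi\|_{H^\infty(\cL(\cF'))}=\|S_\Theta^{-n}\|$. A direct computation from the definition of $A_\Phi^\Theta$ gives $A_{z^n}^\Theta A_\Phi^\Theta=A_{z^n\Phi}^\Theta$, and since the left-hand side equals $I_{\cK_\Theta}$, we obtain $A_{z^n\Phi-I}^\Theta=0$. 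Equivalently, $(z^n\Phi-I)f\in\Theta H^2(\cF)$ for every $f\in\cK_\Theta$; combined with the inclusion $\Phi\Theta H^2(\cF)\subset\Theta H^2(\cF)$, this upgrades to $(z^n\Phi-I)H^2(\cF')\subset\Theta H^2(\cF)$.

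Next, I would establish an operator Bezout-type identity
\[
z^n\Phi(z)-\Theta(z)H(z)=I_{\cF'}\quad(z\in\DD),
\]
for some $H\in H^\infty(\cL(\cF',\cF))$ with $\|H\|_\infty\le\|S_\Theta^{-n}\|+1$. The idea is to divide $z^n\Phi-I$ by $\Theta$ on the left: the map $T:H^2(\cF')\to H^2(\cF)$ defined implicitly by $\Theta\cdot Th=(z^n\Phi-I)h$ is well defined and bounded, because $\Theta$ is inner (hence pointwise isometric a.e.\ on $\TT$, so ``$\Theta$ cancels''), with $\|T\|\le\|S_\Theta^{-n}\|+1$. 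A short calculation shows that $T$ intertwines the shifts on $H^2(\cF')$ and $H^2(\cF)$, and a standard intertwining argument then identifies $T$ with multiplication by the required $H$, of the same norm. This passage from a truncated relation on $\cK_\Theta$ to a Bezout identity for operator-valued $H^\infty$ functions is the main technical point.

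Finally, taking adjoints in the Bezout identity yields $\overline{z}^n\Phi(z)^*-H(z)^*\Theta(z)^*=I_{\cF'}$. Applying both sides to a unit vector $x\in\cF'$ and using the triangle inequality gives
\[
1\le |z|^n\|S_\Theta^{-n}\|+\bigl(\|S_\Theta^{-n}\|+1\bigr)\|\Theta(z)^*x\|\le\max\bigl\{|z|^n,\|\Theta(z)^*x\|\bigr\}\bigl(2\|S_\Theta^{-n}\|+1\bigr).
\]
Taking the infimum over $z\in\DD$ and unit vectors $x\in\cF'$ yields $1\le\delta_n(\Theta)\bigl(2\|S_\Theta^{-n}\|+1\bigr)$, which rearranges to the claimed lower bound.
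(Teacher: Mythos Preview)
Your proposal is correct and follows essentially the same route as the paper: invoke the commutant lifting theorem to obtain $\Phi$ with $\|\Phi\|_\infty=\|S_\Theta^{-n}\|$, derive a Bezout-type identity $z^n\Phi-\Theta H=I$ (the paper writes it as $z^n\Phi-I=\Theta\Xi$), bound $\|H\|_\infty\le\|\Phi\|_\infty+1$ using that $\Theta$ is inner, and then extract $\|\Theta(\lambda)^*x\|$ to conclude. The only cosmetic differences are that the paper obtains the $\|\Theta(\lambda)^*x\|$ term by taking the inner product with $x$ rather than by applying the adjoint identity, and bounds $\|\Xi\|_\infty$ via radial boundary values rather than via the $H^2$-isometry argument you sketch; both variants are equivalent.
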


\begin{proof}
Fix $n\ge1$. Clearly $S_\Theta^{-n}$ commutes with $S_\Theta$, 
so by Theorem~\ref{T:commutant},
we can write $S_{\Theta}^{-n}=A_\Phi^\Theta$, where $\Phi\in H^\infty(\cL(\cF'))$ with 
$\Phi\Theta H^2(\cF)\subset\Theta H^2(\cF)$ and $\|\Phi\|_\infty=\|S_\Theta^{-n}\|$.
Moreover, since we have $S_\Theta^nS_\Theta^{-n}=I_{\cK_\Theta}$,
it follows that
\[
P_{\cK_\theta}(S_{\cF'}^n\Phi-I_{H^2(\cF')})=0,
\]
which implies that
\[
S_{\cF'}^n\Phi-I_{H^2(\cF')}=\Theta\Xi,
\]
where $\Xi\in H^\infty(\cL(\cF',\cF))$ (see \cite[p.119]{Be88}).

For all $x\in\cF'$ and $\lambda\in\DD$, we have
\begin{equation}\label{E:keyeqn}
\lambda^n \Phi(\lambda)x-x=\Theta(\lambda)\Xi(\lambda)x.
\end{equation}
Taking the $\cF'$-inner product with $x$, we deduce that
\begin{align*}
\langle x,x\rangle_{\cF'}
&=\lambda^n\langle \Phi(\lambda) x,x\rangle_{\cF'}-\langle\Theta(\lambda)\Xi(\lambda)x,x\rangle_{\cF'}\\
&=\lambda^n\langle \Phi(\lambda) x,x\rangle_{\cF'}-\langle\Xi(\lambda)x,\Theta(\lambda)^*x\rangle_\cF,
\end{align*}
whence, if $\|x\|=1$, then
\begin{equation}\label{E:keyeqn2}
1\le |\lambda|^n\|\Phi\|_\infty+\|\Xi\|_\infty\|\Theta(\lambda)^*x\|.
\end{equation}
Now, taking radial limits in \eqref{E:keyeqn} as $\lambda\to\zeta\in\TT$, 
and using the fact that $\Phi(\zeta)$ is an isometry
for a.e.\ $\zeta\in\TT$, we obtain
$\|\zeta^n\Phi(\zeta)x-x\|=\|\Xi(\zeta)x\|$ for a.e.\ $\zeta\in\TT$ and every $x\in\cF'$.
In particular, $\|\Xi\|_\infty\le \|\Phi\|_\infty+1$.
Feeding this information back into \eqref{E:keyeqn2}, we obtain
\begin{align*}
1&\le |\lambda|^n\|\Phi\|_\infty+(\|\Phi\|_\infty+1)\|\Theta(\lambda)^*x\|\\
&\le \max\Bigl\{|\lambda|^n,\ \|\Theta(\lambda)^*x\|\Bigr\}\Bigl(2\|\Phi\|_\infty+1\Bigr).
\end{align*}
As this  holds for all $\lambda\in\DD$ and all $x\in\cF'$ with $\|x\|=1$,
we deduce that $1\le \delta_n(\Theta)(2\|\Phi\|_\infty+1)$,
where $\delta_n(\Theta)$ is defined by \eqref{E:deltan2}.
Thus $\|\Phi\|_\infty\ge (1/2)(1/\delta_n(\Theta)-1)$. Finally, as $\|S_\Theta^{-n}\|= \|\Phi\|_\infty$,
the result follows.
\end{proof}

Here are some alternative formulas for $\delta_n(\Theta)$.

\begin{proposition}\label{P:alternatives}
Let $\Theta\in H^\infty(\cL(\cF,\cF'))$ be an inner function, and let $\delta_n(\Theta)$ be given by
\eqref{E:deltan2}.
\begin{enumerate}[\normalfont(i)]
\item If $\sigma(S_\Theta)\subset\TT$, then
$1/\delta_n(\Theta)=\sup_{|\lambda|<1}\min\bigl\{|\lambda|^{-n},\ \|\Theta(\lambda)^{-1}\|\bigr\}.$
\item If $\sigma(S_\Theta)\subsetneq\TT$, then
$1/\delta_n(\Theta)=\sup_{|\lambda|>1}\min\bigl\{|\lambda|^{n},\ \|\Theta(\lambda)\|\bigr\}.$
\end{enumerate}
\end{proposition}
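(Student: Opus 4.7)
The plan is to swap the two infima in the definition of $\delta_n(\Theta)$, express the inner infimum as a minimum modulus of $\Theta(\lambda)^*$, pass to reciprocals to convert $\inf\max$ into $\sup\min$, and for part~(ii) transfer the resulting formula from $\DD$ to its exterior via the Schwarz reflection provided by Corollary~\ref{C:LM2}.

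The key identity I would need is that, for any bounded operator $T:\cF\to\cF'$ and any $a\ge 0$,
\[
\inf_{\|x\|=1}\max\bigl\{a,\|Tx\|\bigr\}=\max\bigl\{a,\,m(T)\bigr\},
\]
where $m(T):=\inf_{\|x\|=1}\|Tx\|$; this is a routine two-case check. Applying it with $a=|\lambda|^n$ and $T=\Theta(\lambda)^*$ turns the definition of $\delta_n(\Theta)$ into $\inf_{\lambda\in\DD}\max\{|\lambda|^n,\,m(\Theta(\lambda)^*)\}$. Under the hypothesis $\sigma(S_\Theta)\subset\TT$, Theorem~\ref{T:LM2} forces $Z_\Theta=\emptyset$, so both $\Theta(\lambda)$ and $\Theta(\lambda)^*$ are invertible for every $\lambda\in\DD$. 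Combining the standard equality $m(S)=\|S^{-1}\|^{-1}$ for invertible $S$ with $\|(\Theta(\lambda)^*)^{-1}\|=\|\Theta(\lambda)^{-1}\|$ yields $m(\Theta(\lambda)^*)=\|\Theta(\lambda)^{-1}\|^{-1}$. Taking reciprocals of the entire formula swaps $\inf\max$ with $\sup\min$ and delivers (i), with the convention at $\lambda=0$ that $|\lambda|^{-n}=+\infty$, so that the minimum there reads $\|\Theta(0)^{-1}\|$.

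For part~(ii), the hypothesis $\sigma(S_\Theta)\subsetneq\TT$ still entails (i), and additionally Corollary~\ref{C:LM2} supplies a holomorphic extension of $\Theta$ to $\CC_\infty\setminus\sigma(S_\Theta)$ satisfying $\Theta(1/\overline{\lambda})^*=\Theta(\lambda)^{-1}$. The substitution $\mu=1/\overline{\lambda}$ bijects $\DD\setminus\{0\}$ with $\{1<|\mu|<\infty\}$ and sends $|\lambda|^{-n}$ to $|\mu|^n$ and $\|\Theta(\lambda)^{-1}\|$ to $\|\Theta(\mu)^*\|=\|\Theta(\mu)\|$, converting the identity from (i) into the one claimed in (ii) on $\{|\mu|>1\}$. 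The main delicate point, which I expect to be the principal obstacle, is the boundary correspondence $\lambda=0\leftrightarrow\mu=\infty$: the contribution $\|\Theta(0)^{-1}\|$ attained at the origin in (i) has to be recovered in (ii) as a limit $|\mu|\to\infty$, where for large $|\mu|$ the term $|\mu|^n$ dominates so the minimum coincides with $\|\Theta(\mu)\|$, and this in turn tends to $\|\Theta(0)^{-1}\|$ by continuity of the extended $\Theta$ at $\infty$. This ensures the two suprema agree and closes the argument.
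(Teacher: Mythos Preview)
Your proof is correct and follows essentially the same route as the paper: for (i) you use Theorem~\ref{T:LM2} to get invertibility of $\Theta(\lambda)$ on $\DD$, rewrite $\inf_{\|x\|=1}\|\Theta(\lambda)^*x\|$ as $1/\|\Theta(\lambda)^{-1}\|$, and take reciprocals; for (ii) you invoke Corollary~\ref{C:LM2} and make the substitution $\mu=1/\overline{\lambda}$. The paper's proof of (ii) is the single line ``Combine part~(i) with Corollary~\ref{C:LM2}'', so your explicit treatment of the $\lambda=0\leftrightarrow\mu=\infty$ correspondence actually fills in a detail the paper leaves implicit.
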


\begin{proof}
(i) By Theorem~\ref{T:LM2}, if $\sigma(S_\Theta)\subset\TT$, then $\Theta(\lambda)$ is invertible
for all $\lambda\in\DD$. In this case, we have
\[
\inf\{\|\Theta(\lambda)^*x\|:\|x\|=1\}=1/\|(\Theta(\lambda)^*)^{-1}\|=1/\|\Theta(\lambda)^{-1}\|.
\]
The result follows upon feeding this information into \eqref{E:deltan2}.

(ii) Combine part~(i) with Corollary~\ref{C:LM2}.
\end{proof}

The next result is an operator-valued analogue 
of Theorem~\ref{T:key}.
An  operator $T\in \cL(\cF,\cF')$ is said to be
\emph{purely contractive} if 
\[
\|Tf\|<\|f\| \quad(f\in\cF,~f\ne0).
\]

\begin{theorem}\label{T:partial}
Let $E$ be a closed subset of $\TT$ such that $|E|=0$.
Then there exists a positive sequence $\epsilon_n\to0$
with the following property.
If $\cF,\cF'$ are finite-dimensional Hilbert spaces, 
if $\Theta:(\CC_\infty\setminus E)\to\cL(\cF,\cF')$ is a  holomorphic function such that
$\Theta|_{\DD}$ is an inner function,
and if $\Theta(\lambda)$ is purely contractive for all $\lambda\in\DD$,
then
\[
\liminf_{n\to\infty}\frac{\delta_n(\Theta)}{\epsilon_n}=0.
\]
\end{theorem}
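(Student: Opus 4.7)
The plan is to mimic the proof of Theorem~\ref{T:key} in the operator-valued setting, replacing the scalar singular measure of $\theta$ by a scalar mass set function extracted from the operator-valued Sz.-Nagy--Potapov measure of $\Theta$. By reflection across $\TT\setminus E$ (cf.\ Corollary~\ref{C:LM2}), the hypothesis that $\Theta$ extends holomorphically to $\CC_\infty\setminus E$ forces $\dim\cF=\dim\cF'$ and $\Theta$ to have no Blaschke--Potapov zeros in $\DD$, so $\Theta$ is a pure operator-valued singular inner function. Let $M$ be its $\cL(\cF)$-valued Sz.-Nagy--Potapov measure: $M$ is positive, finite, supported on $E$ (since the scalar singular measure of $\det\Theta$, which is the trace of $M$, lives on $E$), and non-zero by pure contractivity of $\Theta$. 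Take the sequence $\epsilon_n$ to be exactly the one constructed in the proof of Theorem~\ref{T:key}: with $h$ from Lemma~\ref{L:Besicovitch}, define $t_n$ by $h(t_n)/\bigl(-t_n\log(1-t_n)\bigr)=n(\pi+1)^2$ and put $\epsilon_n:=(1-t_{n+1})^{n/2}$, so that $\epsilon_n\to 0$.

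The central step is the operator analogue of Lemma~\ref{L:innerest}:
\[
-\log m_\Theta(1-\eta)\ \ge\ \frac{1}{(\pi+1)^2}\sup_{|I|=\eta}\frac{\|M(I)\|}{|I|}\qquad(0<\eta\le 1),
\]
where $m_\Theta(r):=\inf_{|\lambda|=r}s_{\min}(\Theta(\lambda))$ and $\|M(I)\|$ is the operator norm. The set function $\nu(I):=\|M(I)\|$ is monotone and countably sub-additive, supported on $E$, with $\nu(E)>0$. In the case where the family $\{M(A)\}$ commutes, simultaneous diagonalization turns $\Theta$ into $V\cdot\mathrm{diag}(\theta_1,\ldots,\theta_d)$ with scalar singular inner entries $\theta_k$ of measures $m_k$; then $s_{\min}(\Theta)=\min_k|\theta_k|$ and $\|M(I)\|=\max_k m_k(I)$, and applying the scalar Lemma~\ref{L:innerest} to $\theta_{k^*}$ with $k^*$ maximizing $m_k(I)$ gives the bound. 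The general finite-dimensional (non-commuting) case can be handled by approximation through Potapov products of rank-one elementary singular factors, or alternatively by passing to the scalar inner factor $\phi_x$ in the inner--outer factorization $\Theta(\cdot)x=\phi_x F_x$ for a direction $x\in\cF$ chosen adapted to the arc $I$: since $\|F_x(\lambda)\|\le 1$ in $\DD$ by the maximum principle applied to the subharmonic $\|F_x\|^2$, one has $s_{\min}(\Theta(\lambda))\le|\phi_x(\lambda)|$, and the scalar Lemma~\ref{L:innerest} for $\phi_x$ closes the estimate once one shows that the singular measure of $\phi_x$ concentrates at least $c\|M(I)\|$ on $I$ for a suitable choice of $x$.

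With the operator-valued Lemma~\ref{L:innerest} in hand the remainder copies the proof of Theorem~\ref{T:key} verbatim. The proof of Lemma~\ref{L:Hausmeasdefn} extends without change to monotone countably-sub-additive set functions, so from $\nu(E)>0$ and $\Lambda_h(E)=0$ one extracts arcs $I_k$ with $|I_k|\to 0$ and $\|M(I_k)\|>h(|I_k|)$. For each $k$ pick $n_k$ with $t_{n_k+1}\le|I_k|<t_{n_k}$; the operator Lemma~\ref{L:innerest} analogue combined with the defining equation of $t_n$ then produces $\lambda_k$ with $|\lambda_k|=1-|I_k|$ and $s_{\min}(\Theta(\lambda_k))<(1-|I_k|)^{n_k}$, whence $\delta_{n_k}(\Theta)<\epsilon_{n_k}^2$ and therefore $\liminf_n\delta_n(\Theta)/\epsilon_n\le\liminf_n\epsilon_n=0$. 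The hard part will be the operator-valued Lemma~\ref{L:innerest}: in the scalar case it is immediate from the Poisson representation $-\log|\theta(\lambda)|=\int P_\lambda\,d\mu$, but for non-commuting $\{M(A)\}$ there is no clean Poisson formula for $-\log s_{\min}(\Theta(\lambda))$, and extracting the sup-over-directions quantity $\|M(I)\|$ as a lower bound forces one to appeal either to Potapov's product decomposition of $\Theta$ or to a delicate direction-adapted analysis via the inner factor of $\Theta(\cdot)x$.
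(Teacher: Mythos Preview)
Your proposal attempts to lift the entire scalar argument of Theorem~\ref{T:key} to the operator setting by working with the Sz.-Nagy--Potapov measure $M$ and establishing an operator analogue of Lemma~\ref{L:innerest} for the quantity $-\log s_{\min}(\Theta)$. You yourself flag this analogue as ``the hard part'', and indeed you do not prove it: the commuting case is fine, but for general $M$ neither of your two suggested routes is carried through. In the direction-adapted approach, the crucial claim---that the singular measure of the scalar inner factor $\phi_x$ of $\Theta(\cdot)x$ puts at least $c\|M(I)\|$ of mass on the arc $I$ for a suitable unit vector $x$---is asserted but not verified, and it is not obvious. The Potapov-product route is only named. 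So as written the proof has a genuine gap at its central step.

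The paper bypasses this difficulty entirely with a one-line determinant trick. After reducing to $\cF'=\cF$ (compose with $\Theta(\zeta_0)^{-1}$ at some $\zeta_0\in\TT\setminus E$), set
\[
\Delta(\lambda):=\det\bigl(\Theta(\lambda)\bigr)^{1/N},\qquad N=\dim\cF.
\]
Since $\Theta(\lambda)$ is invertible on $\DD$, a holomorphic $N$-th root exists; $\Delta$ is then a non-constant singular inner function with singular set contained in $E$ (pure contractivity gives $|\Delta|<1$, and holomorphy across $\TT\setminus E$ gives $|\Delta|=1$ there). Apply the \emph{scalar} Theorem~\ref{T:key} to $\Delta$, with the very same sequence $(\epsilon_n)$. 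The return to $\Theta$ is the elementary matrix inequality $|\det A|\le\|A\|^N$ applied to $A=\Theta(\lambda)^{-1}$, which yields $\|\Theta(\lambda)^{-1}\|\ge|\Delta(\lambda)|^{-1}$ and hence, via Proposition~\ref{P:alternatives}(i), $\delta_n(\Theta)\le\delta_n(\Delta)$ for all $n$. No operator-valued Poisson estimate, no Potapov structure theory, and no direction-adapted analysis is needed.

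It is worth noting that you actually mention $\det\Theta$ in passing (to argue that $\supp M\subset E$), so you were one step away from the key idea; the point is that $\det(\Theta)^{1/N}$ already \emph{is} a scalar singular inner function to which Theorem~\ref{T:key} applies directly, making the operator-valued machinery unnecessary in finite dimensions. This is also why the finite-dimensionality hypothesis is essential here and why the infinite-dimensional case (Conjecture~\ref{Cj:key}) remains open.
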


\begin{proof}
The case $\dim(\cF)=\dim(\cF)=1$ is just Theorem~\ref{T:key}.
Let $(\epsilon_n)$ be the sequence furnished by that theorem.
We shall show that it also works whenever $\dim(\cF),\dim(\cF')<\infty$. 

Fix $\zeta_0\in\TT\setminus E$. Then $\Theta(\zeta_0)$
is a unitary operator from $\cF$ onto $\cF'$. 
Thus, replacing $\Theta$ by $\Theta(\zeta_0)^{-1}\Theta$,
we can suppose from the outset that $\cF'=\cF$.

Let $\cF$ be a Hilbert space of dimension $N$,
and let $\Theta:(\CC_\infty\setminus E)\to\cL(\cF)$ 
be a non-constant holomorphic function such that
$\Theta|_{\DD}$ is an inner function. 
Note that, since $\Theta(\lambda)\Theta(1/\overline{\lambda})^*$
is holomorphic on $\CC\setminus E$ and 
$\Theta(\lambda)\Theta(1/\overline{\lambda})^*
=\Theta(1/\overline{\lambda})^*\Theta(\lambda)=I$ 
for all $\lambda\in\TT\setminus E$, 
the same equality persists for all $\lambda\in\CC\setminus E$.
In particular, $\Theta(\lambda)$ is invertible for all $\lambda\in\CC\setminus E$,
and thus $\sigma(S_\Theta)\subset E$.

Consider the function $\Delta:\DD\to\CC$ defined by
\[
\Delta(\lambda):=\det(\Theta(\lambda))^{1/N} \quad(\lambda\in \DD).
\]
Since $\Theta(\lambda)$ is invertible, we have $\det(\Theta(\lambda))\ne0$,
so there exists a holomorphic choice of $N$-th root on $\DD$ (any such choice will do).
Thus $\Delta$ is holomorphic and nowhere zero on  $\DD$, 
and has boundary values of modulus $1$ on $\lambda\in\TT\setminus E$. 
Also $|\Delta(\lambda)|<1$ for all $\lambda\in \DD$,
since $\Theta(\lambda)$ is purely contractive for all $\lambda\in\DD$.
In particular, $\Delta$ is not a constant function.
Thus $\Delta$ is a singular inner function with  singular measure supported on $E$.

By Theorem~\ref{T:key}, we have
\[
\liminf_{n\to\infty}\frac{\delta_n(\Delta)}{\epsilon_n}=0.
\]
Now, if $A$ is any $N\times N$ matrix, then $|\det(A)|\le\|A\|^N$. 
Applying this inequality with $A=\Theta(\lambda)^{-1}$,
we deduce, using Proposition~\ref{P:alternatives}(i), that
\begin{align*}
1/\delta_n(\Theta)
&=\sup_{|\lambda|<1}\min\Bigl\{|\lambda|^{-n},\ \|\Theta(\lambda)^{-1}\|\Bigr\}\\
&\ge\sup_{|\lambda|<1}\min\Bigl\{|\lambda|^{-n},\ |\det(\Theta(\lambda)^{-1})|^{1/N}\Bigr\}\\
&=\sup_{|\lambda|<1}\min\Bigl\{|\lambda|^{-n},\ |\Delta(\lambda)|^{-1}\Bigr\}=1/\delta_n(\Delta).
\end{align*}
Hence $\delta_n(\Theta)\le\delta_n(\Delta)$ for all $n$, and the result follows.
\end{proof}


\subsection{The model theorem}\label{S:modeltheorem}

Let $H$ be a separable Hilbert space, and let $T\in\cL(H)$ be a contraction.

The \emph{defect operators} of $T$ and $T^*$ are respectively
\[
D_T:=(I-T^*T)^{1/2}
\quad\text{and}\quad
D_{T^*}:=(I-TT^*)^{1/2}.
\]
The corresponding \emph{defect spaces} are
\[
\cD_T:=\overline{D_T(H)}
\quad\text{and}\quad
\cD_{T^*}:=\overline{D_{T^*}(H)}.
\]
Since $D_T$ is a limit of polynomials in $(I-T^*T)$ (and likewise for $D_{T^*}$),
the relation $T(I-T^*T)=(I-TT^*)T$ implies that
\[
TD_T=D_{T^*}T.
\]
In particular, we have $T(\cD_T)\subset \cD_{T^*}$.

The \emph{characteristic function} of $T$ is the function
$\Theta_T:\DD\to\cL(\cD_T,\cD_{T^*})$ defined by
\[
\Theta_T(\lambda)x:= 
\Bigl(-T+\lambda D_{T^*}(I-\lambda T^*)^{-1}D_T\Bigr)x 
\quad (x\in\cD_T,\ \lambda\in\DD).
\]
Clearly $\Theta_T$ is holomorphic on $\DD$. It extends, via the same formula, to
be holomorphic on $\{\lambda\in\CC_\infty:1/\overline{\lambda}\notin\sigma(T)\}$.
In particular, if $\sigma(T)\subset\TT$, 
then $\Theta_T$ is holomorphic on $\CC_\infty\setminus\sigma(T)$.

The following result is a version of the model theorem for contractions.

\begin{theorem}\label{T:model}
Let $T$ be a contraction on a separable Hilbert space 
such that $T^{*n}\to0$ strongly as $n\to\infty$.
Then $\Theta_T$ is a non-constant inner function, 
and $T$ is unitarily equivalent to $S_{\Theta_T}$.
\end{theorem}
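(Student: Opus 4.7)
The plan is to follow the classical Sz.-Nagy--Foias construction for contractions of class $C_{\cdot 0}$. The idea is to build an explicit isometry $V\colon H\to H^2(\cD_{T^*})$ whose image is precisely $\cK_{\Theta_T}$ and which intertwines $T^*$ with $S_{\Theta_T}^*$; the hypothesis $T^{*n}\to 0$ strongly is exactly what makes this embedding onto.

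First I would verify that $\Theta_T$ is an inner function. A direct algebraic manipulation from its definition, using the defect intertwining $TD_T = D_{T^*}T$, yields the key identity
\[
I_{\cD_T} - \Theta_T(\lambda)^*\Theta_T(\lambda) = (1-|\lambda|^2)D_T(I-\overline{\lambda}T)^{-1}(I-\lambda T^*)^{-1}D_T \qquad (\lambda\in\DD).
\]
This shows $\Theta_T(\lambda)$ is contractive on $\DD$, and letting $|\lambda|\to 1^-$ radially gives that its boundary values are isometric a.e.\ on $\TT$. Next I would define
\[
(Vx)(\lambda) := D_{T^*}(I-\lambda T^*)^{-1}x = \sum_{n\ge 0}(D_{T^*}T^{*n}x)\,\lambda^n \qquad (x\in H,\,\lambda\in\DD).
\]
A telescoping computation based on $I - TT^* = D_{T^*}^2$ yields
\[
\|x\|^2 - \|T^{*N}x\|^2 = \sum_{n=0}^{N-1}\|D_{T^*}T^{*n}x\|^2,
\]
and letting $N\to\infty$ using the $C_{\cdot 0}$ hypothesis establishes that $V$ is an isometry. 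Comparing Taylor coefficients immediately gives the intertwining $V T^* = S_{\cD_{T^*}}^* V$, so $V(H)$ is an $S_{\cD_{T^*}}^*$-invariant subspace of $H^2(\cD_{T^*})$.

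The decisive step is then to identify $V(H)$ with $\cK_{\Theta_T}$. One direction---that $V(H)\perp \Theta_T H^2(\cD_T)$---can be verified by a direct Taylor-coefficient computation combined with the defect relations. For the reverse inclusion, since $V(H)^\perp$ is $S$-invariant in $H^2(\cD_{T^*})$, the Beurling--Lax--Halmos theorem gives $V(H)^\perp = \tilde\Theta H^2(\cF)$ for some inner $\tilde\Theta$, and the divisibility just established yields a factorization $\Theta_T = \tilde\Theta\Psi$ with $\Psi$ inner. A purity argument (the $C_{\cdot 0}$ hypothesis forces $\Theta_T$ to be purely contractive, equivalently the isometric dilation of $T$ has no unitary summand) then forces $\Psi$ to reduce to a constant unitary, so $\tilde\Theta H^2(\cF) = \Theta_T H^2(\cD_T)$. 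Consequently $V$ is unitary from $H$ onto $\cK_{\Theta_T}$ intertwining $T^*$ with $S_{\Theta_T}^*$, and hence $T$ with $S_{\Theta_T}$. Non-constancy of $\Theta_T$ follows at once, since a constant unitary would give $\cK_{\Theta_T} = \{0\}$ and force $H = \{0\}$.

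The main obstacle is this last identification: proving $V(H)\subset \cK_{\Theta_T}$ is a routine Taylor-series computation, but proving equality requires invoking the Sz.-Nagy--Foias uniqueness theorem for the characteristic function, which ensures that the inner factor produced by Beurling--Lax--Halmos coincides with $\Theta_T$ itself rather than a proper inner divisor. Alternatively, one can bypass this by constructing the minimal isometric dilation of $T$ directly, using the Wold decomposition together with $T^{*n}\to 0$ to exclude a unitary part, and then reading off both $V$ and the equality $V(H) = \cK_{\Theta_T}$ from the resulting structure.
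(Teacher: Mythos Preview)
The paper does not prove this theorem itself; its entire proof is the single line ``See \cite[p.115, Corollary~1.14]{Be88}.'' Your sketch is essentially the classical Sz.-Nagy--Foias construction that underlies that reference, so in spirit you are reproducing exactly what the paper is invoking.

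One step in your outline is stated too hastily. From the identity
\[
I_{\cD_T}-\Theta_T(\lambda)^*\Theta_T(\lambda)
=(1-|\lambda|^2)\,D_T(I-\overline{\lambda}T)^{-1}(I-\lambda T^*)^{-1}D_T
\]
you cannot conclude that the boundary values of $\Theta_T$ are isometric a.e.\ merely by ``letting $|\lambda|\to1^-$'': for a general contraction this fails, and it is precisely here that the hypothesis $T^{*n}\to0$ must enter. Indeed, integrating $(1-r^2)\|(I-re^{it}T^*)^{-1}D_Tx\|^2$ over $t$ gives $(1-r^2)\sum_{n\ge0}r^{2n}\|T^{*n}D_Tx\|^2$, whose limit as $r\to1^-$ equals $\lim_{n}\|T^{*n}D_Tx\|^2$, so the $C_{\cdot0}$ assumption is exactly what forces this to vanish. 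Your later Beurling--Lax--Halmos step would in fact yield the inner property as a byproduct (once $V(H)^\perp=\Theta_T H^2(\cD_T)$ is established, $\Theta_T$ coincides with an inner function up to a constant unitary), so the overall plan is sound; only the order of presentation in the first paragraph needs adjusting.
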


\begin{proof}
See \cite[p.115, Corollary 1.14]{Be88}.
\end{proof}
 
\begin{theorem}\label{T:pc}
If $T$ is a non-unitary contraction, then, for all $\lambda\in\DD$, 
the operator $\Theta_T(\lambda)$ is purely contractive, i.e.,
\[
\|\Theta_T(\lambda)f\|<\|f\| 
\quad(f\in\cD_T,\ f\ne0).
\]
\end{theorem}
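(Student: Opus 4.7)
The plan is to establish the fundamental defect identity
\[
\|x\|^2-\|\Theta_T(\lambda)x\|^2=(1-|\lambda|^2)\|(I-\lambda T^*)^{-1}D_T x\|^2
\quad(x\in\cD_T,\ \lambda\in\DD),
\]
and to deduce the theorem as a simple consequence. For $\lambda\in\DD$, the operator $(I-\lambda T^*)$ is invertible (since $\|\lambda T^*\|\le|\lambda|<1$) and the factor $(1-|\lambda|^2)$ is strictly positive, so the right-hand side vanishes if and only if $D_T x=0$. But for $x\in\cD_T$, the relation $D_T x=0$ forces $\langle x,D_T g\rangle=\langle D_T x,g\rangle=0$ for every $g\in H$, hence $x\perp D_T(H)$, hence $x\perp\cD_T$, which gives $x=0$. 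The desired strict inequality $\|\Theta_T(\lambda)x\|<\|x\|$ for $x\in\cD_T\setminus\{0\}$ then follows at once.

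To derive the identity, I would set $u:=(I-\lambda T^*)^{-1}D_T x$, so that
\[
D_T x=u-\lambda T^*u \quad\text{and}\quad \Theta_T(\lambda)x=-Tx+\lambda D_{T^*}u.
\]
Expanding $\|-Tx+\lambda D_{T^*}u\|^2$ and using the intertwining relation $D_{T^*}T=TD_T$ (already noted in the excerpt, it follows from $T(I-T^*T)=(I-TT^*)T$ via the functional calculus for positive operators), the cross term becomes $-2\mathrm{Re}(\lambda\langle D_T x,T^*u\rangle)$; substituting $D_T x=u-\lambda T^*u$ inside this inner product and regrouping, one is led after routine algebra to
\[
\|\Theta_T(\lambda)x\|^2=\|Tx\|^2+\|D_T x\|^2-(1-|\lambda|^2)\|u\|^2.
\]
The identity then follows from $\|Tx\|^2+\|D_T x\|^2=\langle(T^*T+D_T^2)x,x\rangle=\|x\|^2$.

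The main obstacle is the bookkeeping in that expansion: one has to track the cross term $2\mathrm{Re}(\lambda\langle u,T^*u\rangle)$ and convert it, via the identity $\|u-\lambda T^*u\|^2=\|D_T x\|^2$, into a combination of $\|u\|^2$, $|\lambda|^2\|T^*u\|^2$ and $\|D_T x\|^2$. Once this is done correctly, the rest of the argument is essentially formal. Note that the hypothesis that $T$ be non-unitary plays no role in the computation itself; it merely ensures that $\cD_T\ne\{0\}$, so that the statement has non-vacuous content.
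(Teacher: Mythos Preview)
Your proposal is correct: the defect identity $\|x\|^2-\|\Theta_T(\lambda)x\|^2=(1-|\lambda|^2)\|(I-\lambda T^*)^{-1}D_Tx\|^2$ holds exactly as you outline, and the conclusion follows since $D_T$ is injective on $\cD_T=(\ker D_T)^\perp$. The paper gives no argument of its own and simply cites \cite[p.245]{SFBL10}, where essentially this same computation is carried out; you have reconstructed the standard proof.
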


\begin{proof}
See \cite[p.245]{SFBL10}.
\end{proof}


\section{Proof of Theorem~\ref{T:main}}\label{S:mainproof}

We begin with some preliminary reductions.

\begin{lemma}\label{L:reduction}
Let $T$ be a non-unitary contraction on a Hilbert space $H$.
Then there exists a separable, closed, $T$-reducing subspace $H_1$ of $H$ 
such that $T_1:=T|_{H_1}$
is completely non-unitary. 
If $(I-T^*T)$ has finite rank, then so does $(I-T_1^*T_1)$.
Furthermore, if $\sigma(T)\subsetneq\TT$, then $\sigma(T_1)\subset\sigma(T)$ 
and $\|T_1^{-n}\|\le\|T^{-n}\|$ for all $n\ge1$.
\end{lemma}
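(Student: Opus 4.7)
The strategy is to first split off the completely non-unitary part of $T$ via the Sz.-Nagy decomposition of a contraction, then restrict to a separable reducing subspace inside it.

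The Sz.-Nagy decomposition furnishes a canonical orthogonal splitting $H = H_u \oplus H_{cnu}$ by $T$-reducing subspaces, with $T|_{H_u}$ unitary and $T|_{H_{cnu}}$ completely non-unitary. Since $T$ is assumed non-unitary, $H_{cnu} \ne \{0\}$, so I can pick $x_0 \in H_{cnu}$ with $x_0 \ne 0$. Take $H_1$ to be the closed linear span of the countable collection of vectors $p(T,T^*)x_0$, where $p$ ranges over polynomials in two non-commuting variables with Gaussian-rational coefficients. By construction $H_1$ is nonzero, separable, invariant under both $T$ and $T^*$ (hence $T$-reducing), and---since $x_0 \in H_{cnu}$ and $H_{cnu}$ is itself $T$-reducing---contained in $H_{cnu}$. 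A reducing subspace of a cnu contraction is automatically cnu: any nonzero unitary reducing subspace $H_2 \subset H_1$ of $T|_{H_1}$ would, because $H_1$ reduces $T$, also reduce the original $T$ and lie in $H_u$, contradicting $H_2 \subset H_1 \subset H_{cnu}$. Therefore $T_1 := T|_{H_1}$ is completely non-unitary.

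It remains to verify the finite-rank and spectral assertions. Since $H_1$ reduces $T$ we have $T_1^* = T^*|_{H_1}$, so $I - T_1^*T_1 = (I - T^*T)|_{H_1}$; the range of the left-hand side therefore sits inside the range of $I - T^*T$, giving $\rank(I - T_1^*T_1) \le \rank(I - T^*T)$. For the spectral bounds, write $T = T_1 \oplus T'$ with $T' := T|_{H_1^\perp}$; then $\sigma(T) = \sigma(T_1) \cup \sigma(T')$, so $\sigma(T_1) \subset \sigma(T)$. Under the hypothesis $\sigma(T) \subsetneq \TT$ we have $0 \notin \sigma(T)$, hence both summands are invertible with $T^{-n} = T_1^{-n} \oplus T'^{-n}$, which immediately forces $\|T_1^{-n}\| \le \|T^{-n}\|$ for every $n \ge 1$.

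The whole proof rests on just two non-trivial ingredients---the Sz.-Nagy unitary/cnu decomposition and the observation that reducing subspaces of cnu contractions remain cnu---both of which are classical and available in \cite{Be88,SFBL10}. I do not anticipate any serious obstacle; the only point deserving a moment of care is the construction of a separable cyclic-type piece inside $H_{cnu}$, which is handled by the Gaussian-rational polynomial trick above.
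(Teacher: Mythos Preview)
Your proof is correct and follows essentially the same construction as the paper: apply the Sz.-Nagy/Langer decomposition, pick a nonzero vector in the cnu part, and take the smallest reducing subspace it generates. The one difference worth noting is in the spectral step: the paper invokes Runge's theorem to show that $(T-\lambda I)^{-1}$, as a norm limit of polynomials in $T$, preserves $H_1$, whereas you simply use the direct-sum identity $\sigma(T_1\oplus T')=\sigma(T_1)\cup\sigma(T')$ and $T^{-n}=T_1^{-n}\oplus T'^{-n}$. Your route is shorter and entirely adequate here since $H_1$ is reducing; the paper's argument has the slight advantage that it would still work if $H_1$ were merely $T$-invariant rather than reducing, but that extra generality is not needed for the lemma as stated.
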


\begin{proof}
By Langer's Lemma \cite[p.7]{Ni86}, 
we can decompose $H$ as the orthogonal direct sum $H'\oplus H''$, 
where $H',H''$ are closed,  $T$-invariant subspaces such that 
$T|_{H'}$ is unitary and $T|_{H''}$ is completely non-unitary. 
Since $T$ is non-unitary, we have $H''\ne0$. 
Pick a non-zero $x\in H''$,
and let $H_1$ be the closure of the set $\{p(T,T^*)x\}$ 
as $p(T,T^*)$ runs through all non-commutative polynomials in $T,T^*$.
Then $H_1$ is closed in $H$, separable, $T$-reducing, 
and $T_1:=T|_{H_1}$ is completely non-unitary. 

Since $H_1$ is $T$-reducing, we have $T_1^*=T^*|_{H_1}$.
It follows easily that $\rank(I-T_1^*T_1)\le \rank(I-T^*T)$.

Suppose now that $\sigma(T)\subsetneq\TT$. 
Let $\lambda\in\CC\setminus\sigma(T)$. 
By Runge's theorem, there exists a 
sequence of polynomials $p_n(z)$ converging uniformly 
to $1/(z-\lambda)$ for $z$ in a neighbourhood 
of $\sigma(T)$, which implies that $p_n(T)\to(T-\lambda I)^{-1}$ in norm. 
In particular $(T-\lambda I)^{-1}H_1\subset H_1$. 
This shows that $(T_1-\lambda I)$ is invertible in $\cL(H_1)$, 
with inverse $(T-\lambda I)^{-1}|_{H_1}$.
Thus $\sigma(T_1)\subset \sigma(T)$.
It also follows that $\|T_1^{-n}\|=\|T^{-n}|_{H_1}\|\le\|T^{-n}\|$ for all $n\ge1$, as claimed.
\end{proof}

\begin{lemma}\label{L:C00}
Let $T$ be a completely non-unitary contraction on a separable Hilbert space,
and suppose that $\sigma(T)\cap\TT$ has Lebesgue measure zero. Then both $T^n$ and $T^{*n}$ 
converge strongly to zero as $n\to\infty$.
\end{lemma}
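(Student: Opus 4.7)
The plan is to reduce the lemma to a classical identification in Sz.-Nagy--Foias theory: a completely non-unitary contraction $T$ is of class $C_{00}$ (that is, $T^n\to 0$ and $T^{*n}\to 0$ strongly) precisely when its characteristic function $\Theta_T$ takes unitary values almost everywhere on $\TT$. Given this, the task is to show that the hypothesis $|\sigma(T)\cap\TT|=0$ forces $\Theta_T$ to be unitary-valued a.e.\ on $\TT$.

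For the latter, I would invoke a general Liv\v sic--Moeller theorem for c.n.u.\ contractions (extending Theorem~\ref{T:LM2} to possibly non-inner characteristic functions): $\sigma(T)\cap\TT$ equals the complement in $\TT$ of the maximal open subset on which $\Theta_T$ admits an analytic continuation taking unitary values. Combined with $|\sigma(T)\cap\TT|=0$, this makes the set of ``regular unitary'' points of $\Theta_T$ open and of full measure in $\TT$; in particular $\Theta_T(\zeta)$ is unitary for a.e.\ $\zeta\in\TT$. The classical characterization of $C_{00}$ then yields $T\in C_{00}$, which is the desired conclusion.

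The principal obstacle is the rigorous invocation of these two results---the general Liv\v sic--Moeller theorem and the $C_{00}$-characterization via unitarity of $\Theta_T$---for c.n.u.\ contractions whose characteristic function need not be inner. Both are standard pieces of Sz.-Nagy--Foias theory that go beyond the compressed-shift setting developed in \S\ref{S:functmodels}, and I would cite them from \cite[Chapter~VI]{SFBL10}. An alternative route, avoiding characteristic functions entirely, uses the minimal unitary dilation $U$ of $T$ on some $K\supset H$ together with its \emph{residual subspace} $\mathcal{R}\subset K$: $U|_{\mathcal{R}}$ is an absolutely continuous unitary with $\sigma(U|_{\mathcal{R}})\subseteq\sigma(T)\cap\TT$, and $\mathcal{R}=\{0\}$ holds precisely when $T^{*n}\to 0$ strongly; hence the measure-zero hypothesis forces $U|_{\mathcal{R}}$ to have absolutely continuous spectral measure supported on a null set, which must vanish. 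Applying the dual argument to $T^*$ completes the proof.
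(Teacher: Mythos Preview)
Your proposal is correct. The paper gives no argument of its own here: its entire proof is the citation ``See \cite[Chapter~II, Proposition~6.7]{SFBL10},'' so there is no approach to compare against beyond an appeal to the Sz.-Nagy--Foias book.

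That said, it is worth noting that your two sketches draw on different parts of \cite{SFBL10}. Your first route---the general Liv\v sic--Moeller description of $\sigma(T)\cap\TT$ for arbitrary c.n.u.\ contractions, combined with the equivalence ``$T\in C_{00}$ iff $\Theta_T(\zeta)$ is unitary a.e.''---is Chapter~VI machinery (the full functional model), and it works exactly as you describe. Your second route, via the residual subspace of the minimal unitary dilation, is closer in spirit to the Chapter~II material the paper actually cites; be aware, though, that the inclusion $\sigma(U|_{\mathcal R})\subseteq\sigma(T)\cap\TT$ is itself usually established by showing that $\Delta_T$ vanishes on the regular set of $\Theta_T$, so this ``alternative'' is not as independent of the characteristic function as you suggest. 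Either way, your proposal supplies more detail than the paper does.
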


\begin{proof}
See \cite[Chapter~II, Proposition~6.7]{SFBL10}.
\end{proof}

Finally, we assemble all the pieces. 

\begin{proof}[Proof of Theorem~\ref{T:main}]
Let $E$ be a closed subset of $\TT$ such that $|E|=0$.
Let $(\epsilon_n)$ be the sequence furnished by Theorem~\ref{T:partial}, 
and set $u_n:=1/\epsilon_n$.
Then $(u_n)$ is a positive  sequence such that $u_n\to\infty$.
Let $T$ be a non-unitary Hilbert-space contraction such that 
$\sigma(T)\subset E$ and $\rank(I-T^*T)<\infty$.
We shall prove that
\[
\limsup_{n\to\infty}\frac{\|T^{-n}\|}{u_n}=\infty.
\]

By Lemma~\ref{L:reduction}, it suffices to consider the case where,
in addition, $T$ is a completely non-unitary contraction acting on a 
separable Hilbert space. By Lemma~\ref{L:C00}
$T^{*n}\to0$ strongly. Define $\cD_T,\cD_{T^*}$ and
$\Theta_T$ as in \S\ref{S:modeltheorem}. 
Since $(I-T^*T)$ has finite rank, it follows that $\dim\cD_T=\dim\cD_{T^*}<\infty$. 
By Theorem~\ref{T:model},  $\Theta_T:\DD\to\cL(\cD_T,\cD_{T^*})$
is an inner function, and $T$ is unitarily equivalent to the corresponding
compressed shift operator $S_{\Theta_T}$.
Furthermore, by Theorem~\ref{T:pc}, $\Theta_T(\lambda)$ is
purely contractive for all $\lambda\in\DD$.
Therefore, by Theorem~\ref{T:opestimates},  we have
\[
\|S_{\Theta_T}^{-n}\|
\ge \frac{1}{2}\Bigl(\frac{1}{\delta_n(\Theta_T)}-1\Bigr) 
\quad(n\ge1),
\]
where 
$\delta_n(\cdot)$ is defined as in that theorem.
From Theorem~\ref{T:partial} we  also have
\[
\liminf_{n\to\infty}\frac{\delta_n(\Theta_T)}{\epsilon_n}=0.
\]
Combining all these various facts, we deduce that
\[
\limsup_{n\to\infty}\frac{\|T^{-n}\|}{u_n}
=\limsup_{n\to\infty}\epsilon_n\|S_{\Theta_T}^{-n}\|
\ge \limsup_{n\to\infty}\frac{\epsilon_n}{2}\Bigl(\frac{1}{\delta_n(\Theta_T)}-1\Bigr)=\infty,
\]
as was to be shown.
\end{proof}


\section{Conclusion}\label{S:conclusion}

As mentioned in the introduction, Theorem~\ref{T:main}
stops short of establishing Conjecture~\ref{Cj:Esterle}
because of the additional hypothesis that
$\rank(I-T^*T)<\infty$. This hypothesis is needed in the proof
in order to use the determinant trick in Theorem~\ref{T:partial}.
The full conjecture would be proved 
if we could establish the following general version of Theorem~\ref{T:partial},
which we  formulate as a conjecture.

\begin{conjecture}\label{Cj:key}
Let $E$ be a closed subset of $\TT$ such that $|E|=0$.
Then there exists a positive sequence $\epsilon_n\to0$
with the following property.
If $\cF,\cF'$ are separable  Hilbert spaces, 
if $\Theta:(\CC_\infty\setminus E)\to\cL(\cF,\cF')$ is a  holomorphic function such that
$\Theta|_{\DD}$ is an inner function,
and if $\Theta(\lambda)$ is purely contractive for all $\lambda\in\DD$,
then
\[
\liminf_{n\to\infty}\frac{\delta_n(\Theta)}{\epsilon_n}=0.
\]
\end{conjecture}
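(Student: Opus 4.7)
The plan is to imitate the proof of Theorem~\ref{T:partial}, attaching to every operator-valued inner function $\Theta$ a scalar singular inner function $\Delta$ whose singular measure is supported on $E$ and which satisfies
\[
\|\Theta(\lambda)^{-1}\|\ge|\Delta(\lambda)|^{-1}\quad(\lambda\in\DD).
\]
Given such a $\Delta$, Proposition~\ref{P:alternatives}(i) immediately yields $\delta_n(\Theta)\le\delta_n(\Delta)$, so the same sequence $(\epsilon_n)$ supplied by the scalar Theorem~\ref{T:key} gives $\liminf_n\delta_n(\Theta)/\epsilon_n\le\liminf_n\delta_n(\Delta)/\epsilon_n=0$. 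As a preliminary, the reduction at the start of the proof of Theorem~\ref{T:partial} allows us to assume $\cF=\cF'$, and Theorem~\ref{T:LM2} together with Corollary~\ref{C:LM2} then guarantees that $\Theta$ extends holomorphically to $\CC_\infty\setminus E$ and that $\Theta(\lambda)$ is invertible for every $\lambda\in\DD$.

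In the finite-dimensional case the determinant $\Delta(\lambda):=\det(\Theta(\lambda))^{1/N}$ does the job, because the matrix inequality $|\det A|\le\|A\|^N$ applied to $A=\Theta(\lambda)^{-1}$ delivers the required domination. In infinite dimensions there is no canonical analogue: scalar multiples $m\in H^\infty$ satisfying $\Theta\Omega=\Omega\Theta=mI$ exist only for contractions of class $C_0$, and Fredholm-style regularized determinants would require $I-\Theta^*\Theta$ to lie in some Schatten ideal, which is not part of the hypotheses. The natural substitute is to construct $\Delta$ potential-theoretically from the subharmonic function
\[
v(\lambda):=\log\|\Theta(\lambda)^{-1}\|\quad(\lambda\in\DD),
\]
which is nonnegative (since $\Theta$ is a contraction), blows up as $\lambda$ approaches $E$, and has vanishing nontangential boundary values almost everywhere on $\TT$ (since $\Theta(\zeta)$ is unitary a.e.). Writing $-\log|\Delta|=P[\nu]$ for the singular measure $\nu$ of a scalar singular inner function~$\Delta$, the desired domination amounts to producing a nontrivial positive finite measure $\nu$ on $E$ such that $v(\lambda)\ge P[\nu](\lambda)$ throughout $\DD$.

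The main obstacle lies in this last step. By the Riesz decomposition, $v=h+G\mu_v$, where $h$ is the greatest harmonic minorant of $v$ and $G\mu_v$ is the Green potential of the Riesz mass of $v$ on $\DD$; the nonnegative harmonic function $h$ is exactly the Poisson integral of a candidate boundary measure, but a priori one cannot rule out $h\equiv 0$, with all the mass of $v$ carried by the interior Riesz part. Excluding this collapse by exploiting that $\Theta^{-1}$ is holomorphic across $\TT\setminus E$, and that $v$ must therefore blow up genuinely along $E$, seems to require showing that the Riesz measure of $v$ is concentrated sufficiently close to $\TT$; equivalently, that $v$ admits a nontrivial harmonic minorant of the form $P[\nu]$ with $\nu$ carried by $E$. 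A secondary line of attack is to exhaust $\Theta$ by finite-dimensional restrictions $\Theta|_{\cF_N}$ for a growing chain of subspaces $\cF_N\subset\cF$, and to pass to a limit of the associated determinantal $\Delta_N$'s, but these restrictions fail to be inner, so producing a nontrivial limiting $\Delta$ with singular measure on $E$ together with uniform mass bounds is itself a delicate open problem. Either route appears to need ideas genuinely beyond those in the present paper.
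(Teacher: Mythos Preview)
The statement in question is Conjecture~\ref{Cj:key}, which the paper explicitly leaves open: the surrounding text reads ``This remains unproved. The ingredient that we lack is an explicit formula for $\Theta$ of the type~\eqref{E:sing}.'' There is therefore no proof in the paper against which to compare your attempt.

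Your proposal is not a proof either, and you say so candidly. After outlining the natural strategy---attach to $\Theta$ a scalar singular inner function $\Delta$ with $\|\Theta(\lambda)^{-1}\|\ge|\Delta(\lambda)|^{-1}$ and invoke Theorem~\ref{T:key}---you correctly identify the obstruction: the Riesz decomposition of the subharmonic function $v(\lambda)=\log\|\Theta(\lambda)^{-1}\|$ may have trivial greatest harmonic minorant, with all the mass carried by the interior Green potential rather than a boundary measure on~$E$. Nothing in the hypotheses rules this out, and without it the construction of a nontrivial $\nu$ supported on~$E$ collapses. Your alternative route via finite-dimensional exhaustion fails for the reason you state: the restrictions $\Theta|_{\cF_N}$ are not inner, so the determinantal $\Delta_N$ need not converge to anything useful.

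Your conclusion that the problem ``appears to need ideas genuinely beyond those in the present paper'' is accurate and matches the author's own assessment. The potential-theoretic reformulation you give is a reasonable way to articulate what is missing, and it is consonant with the paper's remark that the absent ingredient is an integral representation for operator-valued $\Theta$ analogous to~\eqref{E:sing}. But as a proof of the conjecture, the proposal is---by your own admission---incomplete, and the gap is real rather than cosmetic.
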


This remains unproved. 
The ingredient that we lack  is an explicit formula for $\Theta$ of the type \eqref{E:sing}.


\section*{Acknowledgements}

I am grateful to Ken Davidson for bringing 
reference \cite{Sa67} to my attention.

This work owes a great deal to Jean Esterle. 
It was he who first drew my attention to this subject,  
who proposed Conjecture~\ref{Cj:Esterle} 
and who suggested that it could be approached by studying the properties of inner functions. 
The paper is dedicated to his memory.


\bibliographystyle{amsplain}
\bibliography{biblist.bib}

\end{document}